\documentclass[11pt]{article}
\usepackage{geometry}                
\geometry{letterpaper}                  
\usepackage{amsfonts}
\usepackage{amsmath}
\usepackage{amsthm}
\usepackage{amssymb}
\usepackage{tikz}

\tikzstyle{vertex}=[circle, draw, inner sep=0pt, minimum size=6pt]
\newcommand{\vertex}{\node[vertex][fill]}

\usepackage{enumerate}
\theoremstyle{definition}
\theoremstyle{definition}
\theoremstyle{theorem}\newtheorem{prop}{Proposition}
\theoremstyle{definition}
\theoremstyle{definition}
\theoremstyle{definition}
\theoremstyle{definition}\newtheorem*{problem}{Problem}
\theoremstyle{definition}
\theoremstyle{theorem}\newtheorem{thm}{Theorem}
\theoremstyle{theorem}\newtheorem{lemma}{Lemma}
\theoremstyle{definition}
\theoremstyle{definition}
\theoremstyle{definition}
\theoremstyle{theorem}

\newcommand{\R}{\mathbb{R}}

\newcommand{\Z}{\mathbb{Z}}

\newcommand{\D}{\Delta}

\newcommand{\p}{^\prime}

\DeclareGraphicsRule{.tif}{png}{.png}{`convert #1 `dirname #1`/`basename #1 .tif`.png}

\author{Sachi Hashimoto\\ 
\small University of Chicago \\
\small sachi@uchicago.edu}

\title{Sharper Lower Bounds in the Maximum Degree and Diameter Bounded Subgraph Problem in the Mesh}

\begin{document}
\maketitle

\begin{abstract}

The Maximum Degree and Diameter Bounded Subgraph Problem (MaxDDBS) asks: given a host graph $G$, a bound on maximum degree $\D$, and a diameter $D$, what is the largest subgraph of the host graph with degree bounded by $\D$ and diameter bounded by $D$?  In this paper, we investigate this problem when the host graph is the $k$-dimensional mesh.  We provide lower bounds for the size of the largest subgraph of the mesh satisfying MaxDDBS for all $k$ and $\D \geq 4$ that agree with the known upper bounds up to the first two terms, and show that for $\D = 3$, the lower bounds are at least the same order of growth as the upper bounds.

\end{abstract}

\section{Introduction}

Let $G$ be a connected simple undirected graph (called the \textit{host} graph), and $\D$ and $D$ be positive integers.  Then we can ask the following question.

\begin{problem}[MaxDDBS] What is the largest subgraph of $G$ with maximum degree at most $\D$ and diameter bounded by $D$?

\end{problem}

The size of a subgraph of $G$ is measured by the number of vertices in the subgraph.  

MaxDDBS has applications to parallel computing.  Given some host network, we may want to find the largest subnetwork subject to certain constraints.  In a physical system, there is a limit to how many connections we can attach to a single node, so we consider only networks that have bounded degree to be feasible.  Additionally, bounding the diameter bounds the distance between nodes in the subnetwork, which translates to bounding the distance communications may have to travel within the network.  While there exist a range of problems that consider other constraints, we will focus on just bounding the degree and diameter in this paper, as in \cite{Dekker} and \cite{Miller}.  Reference \cite{Dekker} mentions other problems relating to finding subnetworks with certain interesting constraints, and also discusses the potential applications of MaxDDBS in more depth.

MaxDDBS is a natural generalization of the Degree Diameter Problem, DDP, which asks for the largest graph with given degree and diameter.  In particular, when $G = K_n$, MaxDDBS is DDP.  DDP dates back to the 1964 in a paper by B. Elspas \cite{Elspas}.  Despite this, MaxDDBS is a relatively new problem: it was introduced in a 2011 paper by A. Dekker, H. Perez-Roses, G. Pineda-Villavicencio, and P. Watters \cite{Dekker}.  In that paper, they analyze MaxDDBS for the hypercube and the mesh.  In the case of the hypercube, they make use of the Hamming distance in the cube to provide a lower bound of $\sum_{i=0}^D \binom{\D}{i}$ for the largest subgraph of the $k$-dimensional hypercube with maximum degree $\D$ and diameter $D$.

For the mesh, they use the $\ell^1$ metric on $G= \Z^k$ and split the problem into two cases, letting $N^e_k(\D, p)$ be the number of vertices in the largest subgraph of $k$-dimensional mesh with diameter $2p$ and maximum degree $\D$, and $N^o_k(\D,p)$ the number of vertices in the largest subgraph of $k$-dimensional mesh with diameter $2p+1$ and maximum degree $\D$.  Then, since $G$ is regular with degree $2k$, we have a lower bound on $N^e_k(\D,p)$ which is the $\ell^1$ ball of diameter $2p$ in dimension $\lfloor \D/2 \rfloor$, denoted $B^e_{\lfloor \D/2\rfloor}(p)$.  In the even diameter case, this ball contains the most lattice points when centered at a lattice point.  Similarly, $N^o(\D, p)$ has a lower bound of the $\ell^1$ ball of diameter $2p+1$, $B^o_{\lfloor \D/2\rfloor}(p)$, which contains the most lattice points when centered halfway between two adjacent lattice points.  They also assert without proof that $B^e_k(p)$ and $B^o_k(p)$ are upper bounds on $N^e_k(\D, p)$ and $N^o_k(\D, p)$, respectively, giving the following inequalities.

\begin{prop} [A. Dekker, H. Perez-Roses, G. Pineda-Villavicencio, and P. Watters]\footnotemark \footnotetext{This proposition appears in both \cite{Miller} and \cite{Dekker} but so far we know of no proof for the upper bounds.  Note that it is possible to have a subgraph of $k$-dimensional mesh where each vertex is at most distance $2r$ from every other vertex, but the subgraph is not contained in any $\ell^1$ ball of radius $r$.  For example the cube in 3-dimensions of side length 2 with one vertex in each pair of opposite corners removed is one such graph.}

\[ |B^e_{\lfloor \D/2 \rfloor }(p)| \leq N^e_{k } (\D, p) \leq | B^e_{k} (p)|\] 
\[ |B^o_{\lfloor \D/2 \rfloor}(p)| \leq N^o_{k } (\D, p) \leq | B^o_{k} (p)|\] 

where $\D \leq 2k$ and the absolute value denotes number of lattice points.

\end{prop}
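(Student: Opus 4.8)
The two inequalities have very different character, so I would treat them separately. The lower bounds admit a direct, constructive proof; the upper bounds are the delicate part (the footnote records that no proof is currently known), and below I indicate the approach I would attempt and exactly where it stalls.

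\textbf{Lower bounds.} Write $m=\lfloor \D/2\rfloor$. Since $\D\le 2k$ we have $m\le k$, so I may view $\Z^m$ as the sublattice of $\Z^k$ supported on the first $m$ coordinates. In the even case take $H$ to be the subgraph of the mesh induced on $B^e_m(p)=\{x\in\Z^m:\|x\|_1\le p\}$. Every vertex of $H$ has at most $2m=2\lfloor\D/2\rfloor\le\D$ neighbours inside $H$, so the degree bound holds. For the diameter bound, given $x,y\in B^e_m(p)$ consider the lattice path that decreases the coordinates of $x$ one unit at a time down to the origin and then increases them up to $y$: it has length $\|x\|_1+\|y\|_1\le 2p$, and it stays inside $B^e_m(p)$ since the $\ell^1$-norm is nonincreasing along the first half and bounded by $\|y\|_1\le p$ along the second half. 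Hence $\mathrm{diam}(H)\le 2p$ and $N^e_k(\D,p)\ge|B^e_m(p)|$.

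For the odd case, center $B^o_m(p)$ at the midpoint $c=\tfrac12 e_1$, so that it is the set of lattice points within $\ell^1$-distance $p+\tfrac12$ of $c$; let $H$ be the induced subgraph on it. The degree bound is identical. The two lattice points nearest $c$ are $0$ and $e_1$; they are adjacent and both lie in $B^o_m(p)$. A short computation gives $\|x-\tfrac12 e_1\|_1=\min(\|x\|_1,\|x-e_1\|_1)+\tfrac12$ for every lattice point $x$, so each $x\in B^o_m(p)$ satisfies $\min(\|x\|_1,\|x-e_1\|_1)\le p$, i.e.\ $x\in B^e_m(p)$ or $x\in e_1+B^e_m(p)$; in the first case the monotone path from $x$ to $0$ stays in $B^e_m(p)\subseteq B^o_m(p)$, in the second the monotone path from $x$ to $e_1$ stays in $e_1+B^e_m(p)\subseteq B^o_m(p)$. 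Routing any two vertices of $H$ through the edge $\{0,e_1\}$ then yields a path of length at most $p+1+p=2p+1$ inside $H$, so $\mathrm{diam}(H)\le 2p+1$ and $N^o_k(\D,p)\ge|B^o_m(p)|$.

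\textbf{Upper bounds.} Here the degree hypothesis is vacuous: since $\D\le 2k$, any admissible subgraph is in particular a connected subgraph of the $k$-dimensional mesh of diameter $\le 2p$ (resp.\ $\le 2p+1$), so it suffices to bound the number of vertices of such a subgraph by $|B^e_k(p)|$ (resp.\ $|B^o_k(p)|$). The naive route — showing that a connected subgraph of diameter $\le 2p$ lies inside some $\ell^1$-ball of radius $p$ — is unavailable, since by the footnote's example this containment can fail; so the bound cannot come merely from graph distance dominating $\ell^1$-distance. The approach I would try instead is a compression (shifting) argument: repeatedly replace the vertex set, on each line parallel to a coordinate axis, by a centered interval of the same size, and argue that (i) the subgraph stays connected, (ii) its graph diameter does not increase, and (iii) a fully compressed set of graph diameter $\le 2p$ has at most $|B^e_k(p)|$ vertices. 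Steps (ii) and (iii) are the real obstacle: monotonicity of graph diameter in the mesh under compression is not obvious and may require choosing the compression carefully, and this is presumably why the inequality is only asserted in \cite{Dekker} and \cite{Miller}. Accordingly I would carry the lower bounds through rigorously and cite the upper bounds, isolating diameter-monotonicity under a suitable compression as the key lemma that would be needed for a complete proof.
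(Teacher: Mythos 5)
Your treatment matches the paper's: the proposition is quoted from \cite{Dekker} and \cite{Miller}, the paper justifies only the lower bounds (by the same observation you make, that the $\ell^1$ ball in dimension $\lfloor\D/2\rfloor$ embedded in $\Z^k$ has degree at most $\D$ and the required diameter, routed through the center point or central edge), and the footnote explicitly concedes that no proof of the upper bounds is known. Your lower-bound argument is correct and in fact more detailed than the paper's one-sentence justification, and your identification of why the naive ball-containment argument fails for the upper bounds is exactly the content of the paper's footnote.
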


A 2012 paper by M. Miller, H. Perez-Roses, and J. Ryan \cite{Miller} focuses on the case of the $k$-dimensional mesh in more detail, improving the proposed bounds in \cite{Dekker}:  when $\D = 4$, they construct a subgraph of the $3$-dimensional mesh with diameter $D$ which agrees with the asserted upper bounds in Proposition 1 in the first two terms.  They pose the problem of generalizing this construction in $k$ dimensions for $\D = 2k-2$.  We use a pared down version of the constructions in \cite{Miller} to obtain lower bounds for all dimensions and for a constant $\D = 4$ that agree with the upper bounds in Proposition 1 in the first two terms.  In \cite{Miller} they also construct subgraphs of the $2$-dimensional mesh with $\D = 3$ that provide lower bounds which agree with the first term of the asserted upper bounds.  We construct subgraphs of $k$-dimensional mesh with $\D = 3$ that are the same order as the asserted upper bounds.

In light of Proposition 1, it makes sense to consider the values of $|B^e_k(p)|$ and $|B^o_k(p)|$.

\begin{prop}[M. Miller, H. Perez-Roses, and J. Ryan, \cite{Miller}]

\[ |B^e_k(p)| = \sum_{i=0}^k 2^i \binom{k}{i} \binom{p}{i} = \frac{2^k p^k}{k!} +  \frac{2^{k-1} p^{k-1}}{(k-1)!} + O(p^{k-2}) .\] 

\[ |B^o_k(p)| = \sum_{i = 0}^k 2^i \left[ \binom{k}{i} + \binom {k-1}{i} \right] \binom{p}{i}= \frac{2^k p^k}{k!} + \frac{2^k p^{k-1}}{(k-1)!}  + O(p^{k-2}).\]

\end{prop}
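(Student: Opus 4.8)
The plan is to establish the two exact identities first and then extract the leading two terms of each polynomial in $p$. For the even case, recall that $B^e_k(p)$ is the set of $x \in \Z^k$ with $|x_1| + \cdots + |x_k| \le p$. I would count these points by stratifying on the support $\{i : x_i \ne 0\}$: a support of size $i$ can be chosen in $\binom{k}{i}$ ways, the signs of the nonzero entries in $2^i$ ways, and the nonzero absolute values in $\binom{p}{i}$ ways, since the number of positive-integer $i$-tuples with sum at most $p$ equals the number of solutions of $a_1 + \cdots + a_i + a_{i+1} = p$ with $a_1, \dots, a_i \ge 1$ and $a_{i+1} \ge 0$, which is $\binom{p}{i}$ after the shift $a_j \mapsto a_j - 1$. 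Summing over $i$ gives $|B^e_k(p)| = \sum_{i=0}^k 2^i \binom{k}{i}\binom{p}{i}$.

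For the odd case I would not count directly but instead prove the slicing identity $|B^o_k(p)| = |B^e_k(p)| + |B^e_{k-1}(p)|$. Put the center at $c = (\tfrac12, 0, \dots, 0)$. For integer $x_1$ one has $|x_1 - \tfrac12| = \tfrac12 + |x_1|$ when $x_1 \le 0$ and $|x_1 - \tfrac12| = \tfrac12 + (x_1 - 1)$ when $x_1 \ge 1$, so $B^o_k(p)$ is the disjoint union of its $x_1 \le 0$ and $x_1 \ge 1$ parts, and the substitutions $y_1 = -x_1$, resp.\ $y_1 = x_1 - 1$, put each part in bijection with the half-ball $\{(y_1, x_2, \dots, x_k) : y_1 \ge 0,\ y_1 + |x_2| + \cdots + |x_k| \le p\}$. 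The very same half-ball arises when one slices $B^e_k(p)$ into its $x_1 \le 0$ and $x_1 \ge 0$ parts; these two parts overlap exactly in the slice $x_1 = 0$, a copy of $B^e_{k-1}(p)$, so inclusion--exclusion shows the half-ball has $\tfrac12(|B^e_k(p)| + |B^e_{k-1}(p)|)$ points. Doubling, $|B^o_k(p)| = |B^e_k(p)| + |B^e_{k-1}(p)| = \sum_{i=0}^k 2^i\big(\binom{k}{i} + \binom{k-1}{i}\big)\binom{p}{i}$, which is exactly the asserted formula.

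To finish, I would plug in $\binom{p}{i} = \frac{p^i}{i!} - \frac{\binom{i}{2}}{i!}\,p^{i-1} + O(p^{i-2})$; only the $i = k$ and $i = k-1$ summands contribute above order $p^{k-2}$. In $|B^e_k(p)|$ the $i = k$ summand contributes $\frac{2^k}{k!}\,p^k - \frac{2^k\binom{k}{2}}{k!}\,p^{k-1}$ and the $i = k-1$ summand contributes $\frac{2^{k-1}k}{(k-1)!}\,p^{k-1}$; since $\frac{2^k\binom{k}{2}}{k!} = \frac{2^{k-1}(k-1)}{(k-1)!}$, the $p^{k-1}$ coefficients combine to $\frac{2^{k-1}\,(k-(k-1))}{(k-1)!} = \frac{2^{k-1}}{(k-1)!}$. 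For $|B^o_k(p)|$, the extra term $|B^e_{k-1}(p)|$ is $O(p^{k-1})$ with leading coefficient $\frac{2^{k-1}}{(k-1)!}$, so the $p^{k-1}$ coefficient becomes $\frac{2^{k-1}}{(k-1)!} + \frac{2^{k-1}}{(k-1)!} = \frac{2^k}{(k-1)!}$, as claimed.

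None of this is deep; the points that most repay care are the disjointness/overlap bookkeeping in the odd case (an off-by-one between $x_1 \ge 0$ and $x_1 \ge 1$ is easy to make) and the coefficient cancellation in the last step, where one must collect the $p^{k-1}$ term both from the sub-leading part of $\binom{p}{k}$ and from the leading part of $\binom{p}{k-1}$.
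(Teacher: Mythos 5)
Your proof is correct. Note that the paper itself offers no proof of this proposition --- it is quoted verbatim from Miller, P\'erez-Ros\'es, and Ryan --- so there is nothing in the paper to compare your argument against; but every step checks out. The support-stratification count for $|B^e_k(p)|$ is right (the stars-and-bars count of positive $i$-tuples with sum at most $p$ is indeed $\binom{p}{i}$, as the sanity checks $i=0,1,2$ confirm), and the slicing identity $|B^o_k(p)| = |B^e_k(p)| + |B^e_{k-1}(p)|$ is a clean way to get the odd formula without a separate direct count; it agrees with the stated sum because $\binom{k-1}{k}=0$, and it checks in dimension one ($2p+2 = (2p+1)+1$). The asymptotic extraction is also correct: the cancellation $\frac{2^k\binom{k}{2}}{k!} = \frac{2^{k-1}(k-1)}{(k-1)!}$ gives the coefficient $\frac{2^{k-1}}{(k-1)!}$ in the even case, and adding the leading term of $|B^e_{k-1}(p)|$ doubles it to $\frac{2^k}{(k-1)!}$ in the odd case, matching the statement.
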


In Proposition 1, the lower bounds for $N_{k}^e(\D, p)$ and $N_{k}^o(\D, p)$ are  $\Theta(p^{\lfloor \D/2 \rfloor})$ and the upper bound is $\Theta(p^{k})$.  In general, these are not very close, as $\lfloor\D/2 \rfloor \leq k$.  In fact, we will show that the actual values of $N^e_k (\D, p)$ and $N^o_k (\D, p)$ are much closer to the upper bounds for $\D \geq 3$.  In particular, we prove the following theorems.

\begin{thm}

We have the following bounds on $N^e_k (\D, p)$:

\begin{enumerate}

\item $N^e_k (\D, p) = 2$ when $\D =1$;

\item $N^e_k(\D, p) = 4p$ when $\D = 2$;

\item $N^e_k(\D, p) = \Theta(p^k)$ when $\D = 3$;

\item $N^e_k(\D, p) = \frac{2^k p^k}{k!} +  \frac{2^{k-1} p^{k-1}}{(k-1)!} + O(p^{k-2})$ when $\D \geq 4$.

\end{enumerate}

\end{thm}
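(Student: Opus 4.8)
The plan is to obtain every bound by pairing an explicit construction in the mesh (the lower bound) with the upper bound of Proposition~1, whose two-term expansion is Proposition~2; thus for (3) and (4) the real work is the construction, since the upper bounds are taken as input (as the footnote records, the upper bound of Proposition~1 is asserted but not proved in the literature). \emph{Parts (1) and (2) are structural.} A graph of maximum degree $\le 1$ is a disjoint union of edges and isolated vertices, so the only connected ones of finite diameter are $K_1$ and $K_2$; a single mesh edge has diameter $1\le 2p$, giving $N^e_k(1,p)=2$. A connected graph of maximum degree $\le 2$ is a path or a cycle; a path of diameter $\le 2p$ has $\le 2p+1$ vertices, and a cycle $C_n$ has diameter $\lfloor n/2\rfloor$, so $n\le 4p$, while bipartiteness of $\Z^k$ forbids odd cycles. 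For $k\ge 2$ a $4p$-cycle of diameter exactly $2p$ embeds in $\Z^2\subseteq\Z^k$ (for instance as the boundary of a $(2p-1)\times 1$ block), and $4p>2p+1$, so $N^e_k(2,p)=4p$.

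\emph{Part (3).} By Propositions~1 and~2, $N^e_k(3,p)\le|B^e_k(p)|=O(p^k)$, so it suffices to construct degree-$3$ subgraphs of $\Z^k$ of diameter $\le 2p$ and $\Omega(p^k)$ vertices. I would do this by induction on $k$, carrying along a distinguished ``handle'' vertex of degree $\le 1$: let $G_1(m)$ be a path on $m+1$ vertices with a handle at an endpoint, and build $G_k(m)$ from a spine path of $\lfloor m/3\rfloor+1$ vertices along the $x_k$-axis by attaching, at each spine vertex, a translate of $G_{k-1}(\lfloor m/3\rfloor)$ lying in the corresponding coordinate hyperplane, with its handle identified to that spine vertex. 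Copies in distinct hyperplanes are disjoint; each spine vertex acquires degree $\le 2+1=3$; any two vertices are joined by a walk of length $\le\lfloor m/3\rfloor$ into the spine, $\le\lfloor m/3\rfloor$ along it, and $\le\lfloor m/3\rfloor$ out, so $\operatorname{diam}\le m$; and $|G_k(m)|\ge(\lfloor m/3\rfloor+1)\cdot|G_{k-1}(\lfloor m/3\rfloor)|-O(m^{k-1})=\Omega(m^k)$. Leaving one spine endpoint without an attached copy supplies the handle for the next step. Taking $m=2p$ gives $N^e_k(3,p)=\Theta(p^k)$.

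\emph{Part (4).} If $\D>2k$ the degree cap is vacuous and $N^e_k(\D,p)=N^e_k(2k,p)$, so in all cases $N^e_k(\D,p)\le|B^e_k(p)|=\frac{2^kp^k}{k!}+\frac{2^{k-1}p^{k-1}}{(k-1)!}+O(p^{k-2})$ by Propositions~1 and~2. Since a degree-$4$ construction is admissible for every $\D\ge 4$, it suffices to find $S\subseteq B^e_k(p)$ with $|B^e_k(p)\setminus S|=O(p^{k-2})$ such that the induced mesh graph on $S$ has a spanning subgraph of maximum degree $\le 4$ and diameter $\le 2p$. I would build this by induction on $k$, with base cases the path ($k=1$) and the $\ell^1$-ball $B^e_2(p)$ itself ($k=2$, already of degree $\le 4$): use $x_1,x_2$ as a ``thick'' plane where the full mesh is available, slice $\Z^k$ by the value $j$ of $x_k$ with $|j|\le p$, fill slice $j$ using the degree-$4$ object built for $k-1$ on $B^e_{k-1}(p-|j|)$, and join consecutive slices only at vertices that are not interior to either adjacent slice, keeping total degree $\le 4$. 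Since $|B^e_k(p)|=\sum_{|j|\le p}|B^e_{k-1}(p-|j|)|$ and both the inductive loss and the vertices spent at the $O(p^{k-1})$-sized slice interfaces amount to $O(p^{k-2})$, the count reproduces $\frac{2^kp^k}{k!}+\frac{2^{k-1}p^{k-1}}{(k-1)!}+O(p^{k-2})$.

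\emph{The main obstacle} is the construction in (4). The degree bound of $4$ prevents using the full grid on more than two coordinate directions at once, so the inter-slice connections must be threaded through slice-boundary vertices only, yet the assembled object must have diameter at most $2p$ --- not merely $O(p)$ --- which forces the induction hypothesis to control not just the diameter of each slice but the distance from every vertex to the connectors used to leave it, so that cross-slice distances telescope correctly. At the same time one must check that the vertices sacrificed at all interfaces, summed over the recursion, are genuinely $O(p^{k-2})$ rather than a larger power of $p$, since this is exactly what pins down the second term $\frac{2^{k-1}p^{k-1}}{(k-1)!}$. Making the diameter bound and this second-order count hold simultaneously is the delicate point, and is where a pared-down form of the constructions of \cite{Miller} is needed.
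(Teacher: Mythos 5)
Parts (1) and (2) of your proposal are correct and match the paper's Lemmas 1 and 2. Your part (3) is also correct but uses a genuinely different (and arguably cleaner) gluing mechanism than the paper's Proposition 7: the paper reserves, in each $(k-1)$-dimensional block, a \emph{pair} of adjacent vertices of degree $<3$ and threads the $x_k$-direction connections through these pairs in an alternating zig-zag (which is why its diameter analysis needs the parity argument), whereas you reserve a single degree-$\le 1$ handle and identify it with a spine vertex, so each spine vertex ends at degree $\le 3$ immediately. Both give $\Theta(p^k)$ with the required degree and diameter; the constants differ but are irrelevant for a $\Theta$ statement. Your reliance on the (unproved) upper bounds of Proposition 1 for the equalities in (3) and (4) matches the paper's own stance, so that is not a gap relative to the paper.

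Part (4) is where there is a genuine gap, and it is not merely a matter of detail. The second-order term $\frac{2^{k-1}p^{k-1}}{(k-1)!}$ is exactly $|B^e_{k-1}(p)|$, i.e.\ the single central slice $x_k=0$ of the ball: the off-center stack satisfies $2\sum_{i\ge 1}|B^e_{k-1}(p-i)| = \frac{2^kp^k}{k!}+O(p^{k-2})$, so the entire second term must be delivered by one essentially complete extra hyperplane, and no radius may be shrunk by even a constant, since $|B^e_k(p-1)| = \frac{2^kp^k}{k!}-\frac{2^{k-1}p^{k-1}}{(k-1)!}+O(p^{k-2})$ already carries the second term with the wrong sign. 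Your plan to ``join consecutive slices only at vertices that are not interior to either adjacent slice'' cannot be executed as stated: consecutive connectors must agree in their first $k-1$ coordinates to be mesh-adjacent, but the non-interior vertices of the slice at $x_k=j$ have $\ell^1$-norm about $p-|j|$, which varies with $j$, so they are never vertically aligned; while the aligned central vertex of a near-complete slice (e.g.\ your base case $B^e_2(p)$) already has degree $4$ and has no room for the two axis edges. The idea your sketch is missing is the paper's \emph{two-graph} induction: an auxiliary graph $\mathcal{E}_{k-1}(p-1)$ that captures only the leading term but is built with a global degree reserve (its generic vertices have degree $\le 2$), placed at $x_k=1$ precisely so that an entire extra hyperplane of $\approx|B^e_{k-1}(p-1)|$ new vertices can be grafted on at $x_k=0$ by parallel vertical edges, each new vertex getting degree $1$ and each receiving vertex rising only to degree $\le 3$; the main graph $\mathcal{E}'_k(p)$ then uses $\mathcal{E}'_{k-1}$ slices elsewhere. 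Without some such reserved structure, your claim that ``the vertices spent at the $O(p^{k-1})$-sized slice interfaces amount to $O(p^{k-2})$'' is exactly the assertion that needs proof, and the natural implementations of your scheme either violate the degree bound at the connectors or lose a full $\Theta(p^{k-1})$.
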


\begin{thm}

We have the following bounds on $N^o_k (\D, p)$:

\begin{enumerate}

\item $N^o_k (\D, p) = 2$ when $\D =1$;

\item $N^o_k(\D, p) = 4p+2$ when $\D = 2$;

\item $N^o_k(\D, p) = \Theta(p^k)$ when $\D = 3$;

\item $N^o_k(\D, p) =\frac{2^k p^k}{k!} + \frac{2^k p^{k-1}}{(k-1)!}  + O(p^{k-2})$ when $\D \geq 4$.
\end{enumerate}

\end{thm}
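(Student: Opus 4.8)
For parts (3) and (4) the upper bound is immediate from Propositions 1 and 2: $N^o_k(\D,p)\le |B^o_k(p)| = \frac{2^k p^k}{k!}+\frac{2^k p^{k-1}}{(k-1)!}+O(p^{k-2})$, and when $\D\ge 2k$ the degree constraint is vacuous so the same bound persists. Thus for these parts the task is the matching lower-bound construction, and the argument mirrors that of Theorem 1, now with the relevant $\ell^1$ ball centred at the midpoint of a mesh edge — equivalently, the lattice points within distance $p$ of one of two adjacent vertices — so that its $\ell^1$-diameter is exactly $2p+1$. Parts (1) and (2) I would settle directly: a connected graph of maximum degree $1$ is $K_1$ or $K_2$, and $K_2$ sits in the mesh as an edge of diameter $1\le 2p+1$, giving $N^o_k(\D,p)=2$; a connected graph of maximum degree $2$ is a path or a cycle, a path on $2p+2$ vertices and a cycle on $4p+2$ vertices each have diameter $\le 2p+1$, the mesh is bipartite so its cycles have even length, and the $2p\times 1$ rectangular circuit realises a $(4p+2)$-cycle in $\Z^2\subseteq\Z^k$, giving $N^o_k(\D,p)=4p+2$.

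For $\D=3$ the plan is to build, by induction on $k$, a connected subgraph $H_k(m)\subseteq\Z^k$ of maximum degree $3$ on $\Theta(m^k)$ vertices and of diameter at most $(2^k-1)(m-1)$, carrying a distinguished ``root'' at the origin of degree $1$. Let $H_1(m)$ be the path $(0),(1),\dots,(m-1)$ with root $(0)$; given $H_{k-1}(m)$, let $H_k(m)$ be the spine $\{(0,\dots,0,i):0\le i\le m-1\}$ together with, for each $i\in\{1,\dots,m-1\}$, a translate of $H_{k-1}(m)$ in the hyperplane $x_k=i$ whose root is the spine vertex $(0,\dots,0,i)$, and take the root of $H_k(m)$ to be the origin. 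The copies lie in distinct hyperplanes and meet the spine only at roots, so $H_k(m)$ is a simple subgraph; an interior spine vertex has degree $2+1=3$, the endpoints have degree $\le 2$, and every other degree is inherited; $|H_k(m)|=m+(m-1)(|H_{k-1}(m)|-1)=\Theta(m^k)$; and any vertex reaches the spine within $\operatorname{diam}(H_{k-1}(m))$ steps, so $\operatorname{diam}(H_k(m))\le (m-1)+2\operatorname{diam}(H_{k-1}(m))$, which unwinds to $(2^k-1)(m-1)$. Taking $m=1+\lfloor (2p+1)/(2^k-1)\rfloor$ yields a subgraph of diameter $\le 2p+1$ on $\Theta(p^k)$ vertices, so $N^o_k(3,p)=\Theta(p^k)$.

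For part (4) it is enough to treat $\D=4$, since a degree-$4$ subgraph is admissible for every $\D\ge 4$ and the upper bound is independent of $\D$. The plan is to exhibit a subgraph of (a translate of) the ball $B=B^o_k(p)$ of maximum degree $4$, of diameter at most $2p+1$, obtained by discarding only $O(p^{k-2})$ of the vertices of $B$; by Proposition 2 such a subgraph has $\frac{2^k p^k}{k!}+\frac{2^k p^{k-1}}{(k-1)!}+O(p^{k-2})$ vertices, matching the upper bound. I would construct it recursively on $k$, using a pared-down form of the construction of Miller, Perez-Roses and Ryan. The base case $H_2$ is the full induced grid on a $2$-dimensional $\ell^1$ disk, which already has maximum degree $4$ and in which the induced distance equals the $\ell^1$ distance. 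For $k\ge 3$, slice $B$ into layers along the $k$-th coordinate, install in the layer $x_k=t$ the already-constructed degree-$4$ subgraph of the $(k-1)$-dimensional disk of radius about $p-|t|$, and join consecutive layers by $\pm e_k$-edges placed along a thin ``connector'' skeleton running through the axes of the layers; at each connector vertex two of the inherited edges are dropped to make room for the two new $e_k$-edges, preserving the degree bound, while the connector vertices lying too near a layer boundary to shed two edges (and the degenerate layers of bounded radius) are simply deleted. One checks that the layers stay connected after these edge deletions — it is enough to thin, at alternate positions, the edges that meet the connector transversely — and that the deleted vertices number only $O(p^{k-2})$.

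The crux, and the step I expect to be by far the hardest, is to verify that this subgraph has diameter at most $2p+1$ with no additive slack, rather than merely $2p+O(1)$: because $B$ itself has $\ell^1$-diameter $2p+1$, any slack in the diameter bound would force us to shrink $B$ and so destroy the second-order term $\frac{2^k p^{k-1}}{(k-1)!}$; in effect the subgraph must be almost isometric to $B$, which severely constrains which edges may be thinned. One must produce, for every pair of vertices $u,v$, a walk of length $\le 2p+1$. When $u$ and $v$ are far from $\ell^1$-antipodal there is slack $2p+1-\|u-v\|_1=\Omega(1)$ available to absorb the $O(1)$ detours forced by the thinning. When $u$ and $v$ are (near-)antipodal — so that they lie on the boundary of $B$ with the centre on an $\ell^1$-geodesic between them — no detour can be tolerated, and one shows that a geodesic may be routed through the large central layers, where the connector skeleton is long; tracking the lengths, the cancellation $|a-b|-|a|-|b|\le 0$, applied coordinate by coordinate together with the fact that the layer radii shrink away from the centre, brings the route down to $2p+O(1)$, and the residual $O(1)$ must then be eliminated by choosing the thinned edges near the axes with care. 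Pushing this case analysis through all $k$ levels of the recursion while keeping the accumulated corrections from enlarging the deletion count past $O(p^{k-2})$ is the technical heart; the remaining steps are routine manipulations of the binomial identities in Proposition 2.
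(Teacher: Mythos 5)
Parts (1) and (2) match the paper's Lemmas 1 and 2. Your part (3) is correct but takes a genuinely different route: the paper's $G_k(p)$ stacks $\lfloor p/2\rfloor$ copies of $G_{k-1}(p/4)$ and links them through reserved pairs of adjacent low-degree vertices, which requires some bookkeeping about how many ``free'' pairs survive each iteration; your $H_k(m)$, a spine with copies of $H_{k-1}(m)$ hung off it at degree-one roots, with the diameter recursion $d_k\le (m-1)+2d_{k-1}$ and the choice $m\approx 2p/(2^k-1)$, is cleaner and self-contained. Both lose an exponential-in-$k$ constant, which is harmless for a $\Theta(p^k)$ claim.

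Part (4) is where there is a genuine gap, and you have named it yourself: the verification that your ``ball with thinned edges'' has diameter exactly $2p+1$ is never carried out, only sketched (``the residual $O(1)$ must then be eliminated by choosing the thinned edges near the axes with care'' is not an argument, and it is exactly at the near-antipodal pairs, where you concede there is no slack, that it would have to be made precise). Moreover, the difficulty is largely self-inflicted by the choice of approach. Your premise that the subgraph ``must be almost isometric to $B$'' is false: the paper's $\mathcal{O}_k(p)$ and $\mathcal{O}\p_k(p)$ are very far from isometric to the ball (within each layer, distances are realized along sparse, tree-like paths), yet the diameter bound holds because the recursion carries the invariant that \emph{every vertex is within graph distance $p$ of one of the two central vertices $(\pm 1/2,0,\dots,0)$ and within $p+1$ of the other}; any two vertices are then joined through the center in at most $2p+1$ steps, and no antipodal case analysis is needed. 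Each layer is built so that almost all of its vertices have degree at most $2$ and its center has degree $2$, which is what lets consecutive layers be joined along a central spine without any edge deletion; the second-order term is then recovered not by keeping the induced ball nearly intact but by hanging one extra layer of pendant degree-one vertices at $x_k=0$ off the layer at $x_k=1$ (the passage from $\mathcal{O}_k$ to $\mathcal{O}\p_k$). By contrast, your base case $H_2$ (the full induced $2$-dimensional disk) already has interior degree $4$, so every vertical edge you add at the next level forces an edge deletion, and it is precisely this thinning that creates the connectivity and exact-diameter problems you flag as ``the technical heart.'' As written, part (4) is a plausible program, not a proof.
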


Our proof improves the lower bound by constructing an example of a $k$-dimensional subgraph satisfying the constraints on degree and diameter.

In Section 2, we provide constructions of $k$-dimensional graphs with maximum degree $4$ and diameter $2p$ which provide the lower bounds for $N^e_k(\D, p)$ and $N^o_k(\D, p)$ in the $\D = 4$ case of Theorems 1 and 2.  In Section 3, we start by providing proofs of the $\D = 1$ and $\D =2$ cases, and construct a $k$-dimensional subgraph with maximum degree 3 to prove the $\D = 3$ case of Theorems 1 and 2.

\section{Subgraphs of $k$-dimensional Mesh for $\D \geq 4$}

In this section we look at subgraphs of the $k$-dimensional mesh with maximum degree $\D = 4$.  In order to simplify the coordinates in our proofs, we will embed $G$ isometrically in $\R^k$ two ways: in the even diameter case, we define $G^e$ to have the vertices in the lattice $\Z^k$, and in the odd diameter case, we define $G^o$ to have the vertices in the lattice $(\Z+\frac{1}{2})\times \Z^{k-1} $.  Two points in $G^e$ or $G^o$ share an edge if and only if they are distance one from each other under the $\ell^1$ norm.  We let $(x_1, \dots, x_k)$ denote coordinates in $\Z^k$ or $(\Z+\frac{1}{2})\times \Z^{k-1} $, and in the latter note that $x_1$ will always be the noninteger dimension.

Our goal in these constructions is to make subgraphs of the $k$-dimensional mesh that contain almost all of the vertices in the $\ell^1$ ball of radius $p$.  We can think of the $\ell^1$ ball in $k$ dimensions as being built up inductively from $(k-1)$-dimensional cross sections that are $\ell^1$ balls in $k-1$ dimensions of varying diameter.  Note that if we have a construction that contains all but $O(p^{\alpha-1})$ vertices of the $(k-1)$-dimensional $\ell^1$ ball $B^e_{k-1}(p)$, we can stack copies of this construction of diameter $2(p-i)$ located at $x_k= \pm i$ and get a construction with all but $O(p^{\alpha})$ vertices of the $B_k^e(p)$.  As our aim in this section is to prove part 4 of Theorems 1 and 2, we restate these below.

\[ N^e_k(\D,k) =\frac{ 2^kp^k}{k!} + \frac{2^{k-1}p^{k-1}}{(k-1)!} + O (p^{k-2}), \text{ for } \D \geq 4. \tag{Theorem 1, Part 4}\]

 \[ N^o_k(\D,k) = \frac{2^k p^k}{k!} +  \frac{2^k p^{k-1}}{(k-1)!} + O (p^{k-2}), \text{ for } \D \geq 4. \tag{Theorem 2, Part 4} \]

\subsection{The Even Diameter Case}

In our construction that proves the lower bounds in the even diameter case, $N^e_k(\D, p)$, we will build two graphs, $\mathcal{E}_k(p)$ and $\mathcal{E}\p_k(p)$.  The lower bound will come from $\mathcal{E}\p_k(p)$, but both $\mathcal{E}_{k-1}(p)$ and $\mathcal{E}\p_{k-1}(p)$ will be used in building $\mathcal{E}\p_k (p)$.

\begin{prop} There exists a graph $\mathcal{E}_{k}(p)$ centered at the origin satisfying the following conditions:

\begin{enumerate}

\item The degree of any vertex $v= (v_1, \dots, v_{k})$ which is not the origin is 4 if $v_i = 0$ for some $i$ and $2$ or 1 otherwise;

\item The origin, $(0,0, \dots, 0) \in \Z^{k}$ has degree $2$;

\item Any vertex in $\mathcal{E}_{k}(p)$ is distance at most $p$ from the vertex $(0,0, \dots, 0) \in \Z^{k}$;

\item $|\mathcal{E}_{k}(p)| = \frac{2^{k}p^{k}}{k!} + O (p^{k-1}).$ 

\end{enumerate}

\label{Ekp}
\end{prop}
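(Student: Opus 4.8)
The plan is to construct $\mathcal{E}_k(p)$ by induction on $k$. For the base case $k = 1$, I would take $\mathcal{E}_1(p)$ to be a path on the vertices $0, 1, 2, \dots, p$ in $\Z$; the origin has degree $1$, wait — condition (2) wants the origin to have degree $2$, so instead I would take the path on $-1, 0, 1, \dots, p-1$ (or symmetrically $\{-\lceil p/2\rceil, \dots, \lfloor p/2 \rfloor\}$), so that the origin has degree $2$, the two endpoints have degree $1$, every other vertex has degree $2$, the diameter-type condition (every vertex within distance $p$ of the origin) holds, and $|\mathcal{E}_1(p)| = p + 1 = \frac{2^1 p^1}{1!} + O(1)$, matching condition (4). (I would double-check exactly which $p+O(1)$ interval makes conditions (1)–(3) simultaneously clean.)

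For the inductive step, assuming $\mathcal{E}_{k-1}(p)$ exists, I would build $\mathcal{E}_k(p)$ by stacking scaled copies along the new axis $x_k$. Concretely, in the hyperplane $x_k = i$ for each $i$ with $|i| \leq p$, place a copy of $\mathcal{E}_{k-1}(p - |i|)$ (centered at the origin of that hyperplane), and then connect consecutive layers by adding, for each $i$, a single edge between the origin of layer $i$ and the origin of layer $i+1$ along the $x_k$-direction. This "spine" of edges along the $x_k$-axis is what makes the whole graph connected and keeps every vertex within distance $p$ of the global origin: a vertex in layer $i$ is at distance $|i|$ along the spine from the layer-$i$ origin, which by the inductive hypothesis is within $p - |i|$ of that origin, for a total of at most $p$. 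The degree bookkeeping: the global origin gets degree $2$ from the two spine edges to layers $\pm 1$ (it has degree $0$ inside its own layer's copy of $\mathcal{E}_{k-1}$? — no; here I need to be careful: the origin of $\mathcal{E}_{k-1}$ already has degree $2$, so stacking would push the global origin to degree $4$). This is the point that needs the most care, and I expect it to be the main obstacle.

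To resolve the degree issue at the origin and along the axes, I would arrange the spine edges and the internal copies so that a vertex lying on some coordinate axis of $\R^k$ accumulates exactly the right contributions: a vertex $v$ with $v_i = 0$ for some $i$ should get degree $4$, meaning it should be an "interior-type" vertex of a layer copy (degree $2$ there, by the $v_j=0$ clause of the inductive hypothesis applied within the $(k-1)$-dimensional copy) and additionally carry $2$ spine edges; whereas a generic vertex gets degree $1$ or $2$ purely from within its layer. The cleanest fix is probably to route the spine not through the origin but to have each layer's copy $\mathcal{E}_{k-1}(p-|i|)$ be positioned so its own origin is the vertex that receives the vertical edges, and to \emph{remove or avoid} one of the two edges at that origin inside the copy when it would otherwise exceed degree $4$ — i.e., use a slightly modified copy in which the origin has degree $0$ or $1$ internally. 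Since $\mathcal{E}_{k-1}$'s defining properties are flexible up to $O(p^{k-2})$ vertices and the origin is a single vertex, deleting an edge or two there costs nothing asymptotically and does not affect condition (4). I would verify that after this adjustment, conditions (1)–(3) hold for every vertex by splitting into cases (global origin; on an axis; generic), and that the count is
\[
|\mathcal{E}_k(p)| = \sum_{|i| \le p} |\mathcal{E}_{k-1}(p - |i|)| = \sum_{|i|\le p}\left(\frac{2^{k-1}(p-|i|)^{k-1}}{(k-1)!} + O(p^{k-2})\right) = \frac{2^k p^k}{k!} + O(p^{k-1}),
\]
where the last equality is the standard estimate $\sum_{j=0}^{p} j^{k-1} = \frac{p^k}{k} + O(p^{k-1})$ applied to the symmetric sum. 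The error term absorbs the $O(p \cdot p^{k-2}) = O(p^{k-1})$ from the per-layer errors and the one-edge modifications, so (4) follows.

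The remaining routine checks are: (i) the subgraph really is a subgraph of the mesh, i.e., all added edges connect $\ell^1$-adjacent lattice points — true for the axis-aligned spine edges by construction; (ii) no vertex exceeds degree $4$ — the only vertices with contributions from two "directions" are those on an axis, and the case analysis above caps them at $4$; (iii) nothing is double-counted in the vertex sum — the layers are disjoint since they have distinct $x_k$-coordinates. I would present the induction cleanly with the layer/spine picture and relegate the degree case-check to a short displayed table or enumerated list.
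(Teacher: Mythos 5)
Your overall strategy---induction on $k$, stacking copies of $\mathcal{E}_{k-1}(p-|i|)$ in the hyperplanes $x_k=i$ and joining them by a ``spine'' of edges along the $x_k$-axis---is exactly the paper's construction, and your inductive vertex count and diameter argument are the right ones. But there are two genuine errors in the proposal as written. First, the base case: you take a path of $p+1$ vertices and assert $|\mathcal{E}_1(p)| = p+1 = \tfrac{2^1p^1}{1!}+O(1)$, but $\tfrac{2^1p^1}{1!}=2p$, so condition (4) fails. The correct base case is the full induced interval $[-p,p]$, which has $2p+1$ vertices and whose origin already has degree $2$, so the workaround you introduce to fix the origin's degree is unnecessary and actively harmful. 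This is not cosmetic: the error propagates through the induction and costs a factor of $2^{k-1}$ in the leading coefficient in dimension $k$, and matching the leading coefficient $\tfrac{2^k}{k!}$ of $|B^e_k(p)|$ is the whole point of the proposition.

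Second, your resolution of the degree conflict at the global origin does not work. You correctly identify that placing a full copy of $\mathcal{E}_{k-1}(p)$ at $x_k=0$ and adding two spine edges would give the origin degree $4$, violating condition (2); your fix is to delete one or both internal edges at that layer's origin. But the origin of $\mathcal{E}_{k-1}(p)$ is a cut vertex of that copy (it is the sole junction of that copy's own spine), so deleting its incident edges disconnects one or both halves of the layer-$0$ copy from the rest of the graph. Those vertices then have no path to the origin at all, so condition (3) --- which is a statement about graph distance inside the subgraph --- fails, not merely degrades by $O(p^{k-2})$. The paper's fix is simpler and correct: place copies only at $x_k=\pm i$ for $1\le i\le p-2$ and put \emph{only the single vertex} $(0,\dots,0)$ in the layer $x_k=0$, so the origin has exactly the two spine edges and degree $2$. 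The omitted layer costs $O(p^{k-1})$ vertices, which is absorbed by the error term in condition (4); recovering most of those vertices is deferred to the companion construction $\mathcal{E}'_k(p)$, where they are hung off the layer $x_k=1$ by single edges.
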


\begin{figure}
\begin{centering}

\begin{tikzpicture}[scale=0.5]

\vertex[shape=rectangle] (0,0) at (0,0) {};
\vertex (1,0) at (1,0) {};
\vertex (-1,0) at (-1,0) {};

\path
	(0,0) edge (1,0)
	(0,0) edge (-1,0)
	;

\end{tikzpicture} \,\,\,
\begin{tikzpicture}[scale=0.5]

\vertex[shape=rectangle] (0,0) at (0,0) {};
\vertex (1,0) at (1,0) {};
\vertex (-1,0) at (-1,0) {};
\vertex (2,0) at (2,0) {};
\vertex (-2,0) at (-2,0) {};

\path
	(0,0) edge (1,0)
	(0,0) edge (-1,0)
	(1,0) edge (2,0)
	(-1, 0) edge (-2, 0)
	;

\end{tikzpicture} \,\,\,
\begin{tikzpicture}[scale=0.5]

\vertex[shape=rectangle] (0,0) at (0,0) {};
\vertex (1,0) at (1,0) {};
\vertex (-1,0) at (-1,0) {};
\vertex (2,0) at (2,0) {};
\vertex (-2,0) at (-2,0) {};
\vertex (3,0) at (3,0) {};
\vertex (-3,0) at (-3,0) {};

\path
	(0,0) edge (1,0)
	(0,0) edge (-1,0)
	(1,0) edge (2,0)
	(-1, 0) edge (-2, 0)
	(-2, 0) edge (-3,0)
	(2,0) edge (3,0)
	;

\end{tikzpicture}

\end{centering}
\caption{$\mathcal{E}_1(p)$ for $p = 1, 2, 3$}
\end{figure}
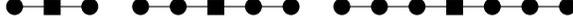

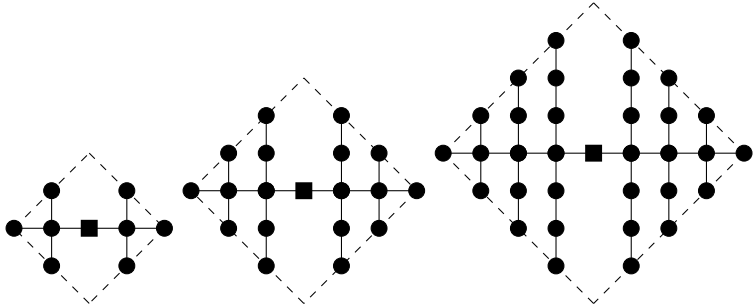
\begin{figure}
\begin{centering}

\begin{tikzpicture}[scale=0.5]

\draw [dashed] (2,0) -- (0,-2);
\draw [dashed] (0,-2) -- (-2,0);
\draw [dashed] (-2,0) -- (0,2);
\draw [dashed] (0,2) -- (2,0);

\vertex (-2,0) at (-2,0) {};
\vertex (-1,0) at (-1,0) {};
\vertex (-1,1) at (-1,1) {};  
\vertex (-1,-1) at (-1,-1) {};

\vertex[shape=rectangle] (0,0) at (0,0) {};

\vertex (2,0) at (2,0) {};
\vertex (1,0) at (1,0) {};
\vertex (1,1) at (1,1) {};  
\vertex (1,-1) at (1,-1) {};

\path
	(-2,0) edge (-1,0)
	
	(-1,0) edge (-1,-1)
	(-1,0) edge (-1,1)
	
	(0,0) edge (-1,0)
	(0,0) edge (1,0)
	
	(1,0) edge (1,-1)
	(1,0) edge (1,1)

	(2,0) edge (1,0)

	 ;   
	 
\end{tikzpicture}
\begin{tikzpicture}[scale=0.5]

\draw [dashed] (3,0) -- (0,-3);
\draw [dashed] (0,-3) -- (-3,0);
\draw [dashed] (-3,0) -- (0,3);
\draw [dashed] (0,3) -- (3,0);

\vertex (-3,0) at (-3,0) {};

\vertex (-2,0) at (-2,0) {};
\vertex (-2,1) at (-2,1) {};  
\vertex (-2,-1) at (-2,-1) {};

\vertex (-1,0) at (-1,0) {};
\vertex (-1,0) at (-1,0) {};
\vertex (-1,1) at (-1,1) {};  
\vertex (-1,-1) at (-1,-1) {};
\vertex (-1,2) at (-1,2) {};  
\vertex (-1,-2) at (-1,-2) {};

\vertex[shape=rectangle]  (0,0) at (0,0) {};

\vertex (1,0) at (1,0) {};
\vertex (1,0) at (1,0) {};
\vertex (1,1) at (1,1) {};  
\vertex (1,-1) at (1,-1) {};
\vertex (1,2) at (1,2) {};  
\vertex (1,-2) at (1,-2) {};

\vertex (2,0) at (2,0) {};
\vertex (2,1) at (2,1) {};  
\vertex (2,-1) at (2,-1) {};

\vertex (3,0) at (3,0) {};

\path
	(-3,0) edge (-2,0)
	
	(-2,0) edge (-2,-1)
	(-2,0) edge (-2,1)
	
	(-2,0) edge (-1,0)
	
	(-1,0) edge (-1,-1)
	(-1,0) edge (-1,1)
	(-1,-1) edge (-1,-2)
	(-1,1) edge (-1,2)
	
	(0,0) edge (-1,0)
	(0,0) edge (1,0)
		
	(1,0) edge (1,-1)
	(1,0) edge (1,1)
	(1,-1) edge (1,-2)
	(1,1) edge (1,2)
	
	(2,0) edge (1,0)

	(2,0) edge (2,-1)
	(2,0) edge (2,1)

	(3,0) edge (2,0)

	 ;   
	 
\end{tikzpicture}
\begin{tikzpicture}[scale=0.5]
\draw [dashed] (4,0) -- (0,-4);
\draw [dashed] (0,-4) -- (-4,0);
\draw [dashed] (-4,0) -- (0,4);
\draw [dashed] (0,4) -- (4,0);

\vertex (-4,0) at (-4,0) {};

\vertex (-3,0) at (-3,0) {};
\vertex (-3,1) at (-3,1) {};  
\vertex (-3,-1) at (-3,-1) {};

\vertex (-2,0) at (-2,0) {};
\vertex (-2,0) at (-2,0) {};
\vertex (-2,1) at (-2,1) {};  
\vertex (-2,-1) at (-2,-1) {};
\vertex (-2,2) at (-2,2) {};  
\vertex (-2,-2) at (-2,-2) {};

\vertex (-1,0) at (-1,0) {};
\vertex (-1,0) at (-1,0) {};
\vertex (-1,1) at (-1,1) {};  
\vertex (-1,-1) at (-1,-1) {};
\vertex (-1,2) at (-1,2) {};  
\vertex (-1,-2) at (-1,-2) {};
\vertex (-1,3) at (-1,3) {};  
\vertex (-1,-3) at (-1,-3) {};

\vertex[shape=rectangle]  (0,0) at (0,0) {};

\vertex (1,0) at (1,0) {};
\vertex (1,0) at (1,0) {};
\vertex (1,1) at (1,1) {};  
\vertex (1,-1) at (1,-1) {};
\vertex (1,2) at (1,2) {};  
\vertex (1,-2) at (1,-2) {};
\vertex (1,3) at (1,3) {};  
\vertex (1,-3) at (1,-3) {};

\vertex (2,0) at (2,0) {};
\vertex (2,0) at (2,0) {};
\vertex (2,1) at (2,1) {};  
\vertex (2,-1) at (2,-1) {};
\vertex (2,2) at (2,2) {};  
\vertex (2,-2) at (2,-2) {};

\vertex (3,0) at (3,0) {};
\vertex (3,1) at (3,1) {};  
\vertex (3,-1) at (3,-1) {};

\vertex (4,0) at (4,0) {};

\path
	(-4,0) edge (-3,0)
	
	(-3,0) edge (-3,-1)
	(-3,0) edge (-3,1)
	(-3,0) edge (-2,0)
	
	(-2,0) edge (-2,-1)
	(-2,0) edge (-2,1)
	(-2,-1) edge (-2,-2)
	(-2,1) edge (-2,2)
	(-2,0) edge (-1,0)
	
	(-1,0) edge (-1, -1)
	(-1,0) edge (-1, 1)
	(-1,1) edge (-1, 2)
	(-1, 2) edge (-1, 3)
	(-1,-1) edge (-1, -2)
	(-1,-2) edge (-1,-3)
	
	(0,0) edge (-1,0)
	(0,0) edge (1,0)
		
	(1,0) edge (1, -1)
	(1,0) edge (1, 1)
	(1,1) edge (1, 2)
	(1, 2) edge (1, 3)
	(1,-1) edge (1, -2)
	(1,-2) edge (1,-3)
	
	(2,0) edge (2,-1)
	(2,0) edge (2,1)
	(2,-1) edge (2,-2)
	(2,1) edge (2,2)
	(2,0) edge (1,0)
	
	(2,0) edge (3,0)

	(3,0) edge (3,-1)
	(3,0) edge (3,1)
	
	(4,0) edge (3,0)

	 ;   

\end{tikzpicture}

\end{centering}
\caption{Construction $\mathcal{E}_2(p)$ for $\D = 4$ and $D = 2,3,4$}

\end{figure}

\begin{proof}
We start by constructing a base case.  In dimension $k=1$, define $\mathcal{E}_1(p)$ to be the induced subgraph on vertices in the interval $[-p,p]$ as shown in Figure 1.  Now assume that we have constructed $\mathcal{E}_{k-1}(p)$ in $\Z^{k-1}$ satisfying the conditions in Proposition \ref{Ekp}.

We construct $\mathcal{E}_k(p)$ from $\mathcal{E}_{k-1}(p)$ as follows: at $x_k = \pm i$, for $1 \leq i \leq p-2$, place a copy of $\mathcal{E}_{k-1}(p-i)$.  We add the vertex $(0,0, \dots, 0) \in \Z^k$ to $\mathcal{E}_k(p)$, and connect $(0,0, \dots, 0, j) \in \Z^k$ with an edge to $(0,0, \dots, 0, j +1)$, for $-(p-2) \leq j < (p-2)$.  Figure 2 shows the construction of $\mathcal{E}_2 (p)$.

First we check that the degrees are bounded by four.  The only vertices whose degree changed were the vertices at $(0,0, \dots, 0, j)$ for $-(p-2) \leq j \leq p-2$.  When $j \neq 0$, this vertex was the center of $\mathcal{E}_{k-1}(p-|j|)$ and thus by condition 2 had degree $2$.  We added $2$ edges to it, making it degree 4.  When $j =0$ this is the origin in $\Z^k$, and we constructed $\mathcal{E}_k(p)$ such that $(0,0, \dots, 0) \in \Z^k$ connects to $(0,0, \dots, 0, \pm 1)$, so it has degree 2.  Thus $\mathcal{E}_k (p)$ satisfies conditions 1 and 2.

Next, we check the condition on the diameter.  Let $v$ be some vertex, not the origin, in $\mathcal{E}_k(p)$.  Assume $v$ is located in the plane $x_k= i$ in $\mathcal{E}_{k-1}(p-|i|)$ for $-(p-2) \leq i \leq p-2$.  Then by condition 3, we know that $v$ is at most $p-|i|$ from $(0,0, \dots, 0, i) \in \Z^k$ and furthermore $(0,0, \dots, 0, i)$ is distance $|i|$ from $(0,0, \dots, 0) \in \Z^k$.  Thus $v$ has distance at most $p$ from $(0,0, \dots, 0)$, and $\mathcal{E}_k(p)$ satisfies condition 3.

Now, we count the number of vertices in $\mathcal{E}_k(p)$.  By induction we know that \\ $|\mathcal{E}_{k-1}(p-i)| = | B^e_{k-1}(p-i)| + O (p^{k-2})$.  Therefore  
\begin{eqnarray*}
|\mathcal{E}_k (p)| &=& 2\sum_{i=1}^{p-2} \ |\mathcal{E}_{k-1}(p-i)| +1\\
 &=& 2 \sum_{i=1}^{p-2}\left( |B^e_{k-1}(p-i)| + O (p^{k-2})\right) +1\\
  &=& 2 \sum_{i=1}^{p-2}\left( \frac{2^{k-1} (p-i)^{k-1}}{(k-1)!} + O (p^{k-2})\right) +1\\
  &=& \frac{2^k p^k}{k!} + O (p^{k-1}).
 \end{eqnarray*} 
 Thus we have checked all four of the conditions, and we can continue on in this manner, constructing $\mathcal{E}_k(p)$ for all $k$.
 
\end{proof}

\begin{prop}

There exists a graph $\mathcal{E}_{k}\p(p)$ centered at the origin satisfying the following conditions:

\begin{enumerate}

\item The degree of any vertex is bounded by 4;

\item The origin, $(0,0, \dots, 0) \in \Z^{k}$ has degree $2$;

\item Any vertex in $\mathcal{E}_{k}\p(p)$ is at most distance $p$ from the vertex $(0,0, \dots, 0) \in \Z^{k}$;

\item $|\mathcal{E}_{k}\p(p)| = \frac{2^{k}p^{k}}{k!} + \frac{2^{k-1}p^{k-1}}{(k-1)!}+ O (p^{k-2}).$ 

\end{enumerate}
\label{Eprime}
\end{prop}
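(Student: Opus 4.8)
The plan is to construct $\mathcal{E}\p_k(p)$ by induction on $k$, following the pattern of $\mathcal{E}_k(p)$ but repairing the one place where that construction is wasteful: its cross-section at $x_k=0$ is a single point, so $\mathcal{E}_k(p)$ throws away an entire $(k-1)$-dimensional ball $\approx|B^e_{k-1}(p)|$, which is exactly the $\frac{2^{k-1}p^{k-1}}{(k-1)!}$ we must now recover. For the base case set $\mathcal{E}\p_1(p)=\mathcal{E}_1(p)=[-p,p]$, which has $2p+1=\frac{2^1p^1}{1!}+\frac{2^0p^0}{0!}$ vertices, an origin of degree $2$, and every vertex within $p$ of the origin. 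For the inductive step, place at $x_k=i$, for every $i$ with $|i|\le p-2$ — now including $i=0$ — a copy of $\mathcal{E}\p_{k-1}(p-|i|)$ centered at $(0,\dots,0,i)$, and join consecutive cross-sections along a ``spine'' running parallel to the $x_k$-axis. It matters that we use $\mathcal{E}\p_{k-1}$ and not $\mathcal{E}_{k-1}$ for \emph{all} cross-sections: there are $\Theta(p)$ of them, and only the primed pieces carry an $O(p^{k-3})$ error term, so the accumulated error stays $O(p^{k-2})$ rather than growing to $O(p^{k-1})$.

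Granting the spine, conditions (3) and (4) are routine. For (3), a vertex in the cross-section at $x_k=i$ lies within $p-|i|$ of that cross-section's center by the inductive form of (3), and the center reaches the origin along the spine in about $|i|$ steps, so every vertex is within about $p$ of the origin. For (4), since the spine contributes edges but no new vertices,
\begin{eqnarray*}
|\mathcal{E}\p_k(p)| &=& |\mathcal{E}\p_{k-1}(p)| + 2\sum_{i=1}^{p-2}|\mathcal{E}\p_{k-1}(p-i)| + O(p^{k-2})\\
&=& \frac{2^{k-1}p^{k-1}}{(k-1)!} + 2\sum_{j=2}^{p-1}\left(\frac{2^{k-1}j^{k-1}}{(k-1)!}+\frac{2^{k-2}j^{k-2}}{(k-2)!}\right) + O(p^{k-2})\\
&=& \frac{2^{k}p^{k}}{k!} + \frac{2^{k-1}p^{k-1}}{(k-1)!} + O(p^{k-2}),
\end{eqnarray*}
where in the last line one uses $\sum_{j<p}j^{k-1}=\frac{p^k}{k}-\frac{p^{k-1}}{2}+O(p^{k-2})$ and $\sum_{j<p}j^{k-2}=\frac{p^{k-1}}{k-1}+O(p^{k-2})$: the two $p^{k-1}$ contributions of the sum cancel, so the whole second-order term is supplied by the one extra cross-section $\mathcal{E}\p_{k-1}(p)$ that was inserted at $x_k=0$.

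The real work — and the step I expect to be the main obstacle — is the spine, i.e.\ conditions (1) and (2). In $\mathcal{E}_k(p)$ the cross-sections were chained through their centers along the $x_k$-axis, each center passing from degree $2$ to degree $4$; but now the origin is the center of the full cross-section $\mathcal{E}\p_{k-1}(p)$, where by (2) it already has degree $2$, so the chaining spine cannot run through it. The remedy is to run the spine one unit off the $x_k$-axis, through the two neighbors of the origin inside the central cross-section (and, recursively, the analogous neighbors in each cross-section), which keeps the origin at degree exactly $2$ and every other vertex at degree at most $4$. The price is a bounded detour for vertices lying on the old axis hyperplane, which one absorbs by retaining the restriction $|i|\le p-2$ and, if necessary, discarding the $O(p^{k-2})$ vertices of $\mathcal{E}\p_k(p)$ lying within a bounded distance of the sphere $\|x\|_1=p$. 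The delicate point is that the spine of $\mathcal{E}\p_k$ must be assembled from offset copies of the spines of the lower-dimensional pieces, so these detours are nested across the $k$ levels of the recursion; to keep them from accumulating I would strengthen the inductive hypothesis to record that every vertex of $\mathcal{E}\p_{k-1}(m)$ reaches its center along a path whose length is its $\ell^1$-distance to that center (or within a fixed constant of it), which is precisely what guarantees that adding one more detour at level $k$ leaves every distance at most $p$.
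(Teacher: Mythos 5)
Your counting is right and pleasantly sharp: with cross-sections $\mathcal{E}\p_{k-1}(p-|i|)$ at $x_k=i$ for $|i|\le p-2$, the slices with $i\neq 0$ contribute exactly $\frac{2^kp^k}{k!}+O(p^{k-2})$ (the two $p^{k-1}$ contributions cancel, as you note), so the single extra slice at $x_k=0$ supplies the whole term $\frac{2^{k-1}p^{k-1}}{(k-1)!}$. But the step you yourself flag as the main obstacle --- the spine --- is where the proposal breaks, in two places. First, the degree bookkeeping: your inductive hypothesis certifies nothing about any vertex except that the center has degree $2$ and everything else has degree at most $4$, so the only vertices guaranteed to accept two new spine edges are the centers, and the center of the $x_k=0$ slice is the origin, which must stay at degree $2$. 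Your fix is to route the spine through the two neighbors of each center, but in the recursive construction those neighbors \emph{are} the previous level's spine and already have degree $4$: already at $k=3$, the neighbors $(\pm 1,0)$ of the center of $\mathcal{E}\p_2(m)$ carry two path edges and two level-$2$ spine edges. Repairing this forces you to build into the induction a supply of designated low-degree vertices --- which is exactly the role of the auxiliary unprimed graph $\mathcal{E}_{k-1}$ in the paper, whose condition 1 reserves degree-$\le 2$ vertices (those with all coordinates nonzero). The paper then takes a genuinely different route: it places no slice at $x_k=0$, runs the spine straight down the axis through the origin, and recovers the missing $\Theta(p^{k-1})$ by hanging a layer of degree-$1$ pendant vertices in the plane $x_k=0$ off the low-degree vertices of an unprimed copy $\mathcal{E}_{k-1}(p-1)$ placed at $x_k=1$.

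Second, the detour absorption is quantitatively wrong. Condition 3 has no slack: a vertex at $\ell^1$-distance exactly $p$ from the origin must reach it in exactly $p$ steps, so any additive detour must cost nothing on the boundary. Your fallback --- discard the vertices within bounded distance of the sphere $\|x\|_1=p$ --- removes a set of size $\Theta(p^{k-1})$, not $O(p^{k-2})$ (the sphere alone carries $\Theta(p^{k-1})$ lattice points); equivalently, shrinking the radius by any constant $c\ge 1$ changes the count by $-\frac{2^kcp^{k-1}}{(k-1)!}+O(p^{k-2})$, which already exceeds the term $\frac{2^{k-1}p^{k-1}}{(k-1)!}$ you are trying to gain. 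So the detours must be engineered to vanish for boundary vertices (in the paper every non-pendant vertex reaches the origin along an $\ell^1$-geodesic through the axis), and the ``strengthen the hypothesis to geodesics'' idea has to be carried out explicitly rather than invoked. As written, the construction either violates condition 3 or forfeits the very second-order term it was built to capture.
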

\begin{proof}
We start by constructing a base case.  In dimension $k = 1$, define $\mathcal{E}_1\p(p)$ to be the induced subgraph on vertices in the interval $[-p, p]$.  Now assume that we have constructed $\mathcal{E}_{k-1}\p(p)$ satisfying the conditions in Proposition \ref{Eprime}.

We construct $\mathcal{E}_k\p(p)$ from $\mathcal{E}_{k-1}\p(p)$ and $\mathcal{E}_{k-1}(p)$ as follows: at $x_k = \pm i$, for $2 \leq i \leq p-2$ place a copy of $\mathcal{E}_{k-1}\p(p-i)$.  Also place a copy of $\mathcal{E}_{k-1}\p(p-1)$ at $x_k = -1$.  At $x_k = 1$ place a copy of $\mathcal{E}_{k-1}(p-1)$.  Connect $(0, 0, \dots, 0, j) \in \Z^k$ with an edge to $(0,0, \dots, 0, j+1)$, for $-(p-1) \leq j \leq p-1$, adding the vertex $(0,0, \dots, 0)$ to $\mathcal{E}_k\p(p)$.  Furthermore, we include most of the vertices in $B^e_{k-1}(p-1)$ in the plane $x_k = 0$ by connecting $(v_1, \dots, v_{k-1}, 0)$ to $(v_1, \dots, v_{k-1}, 1)$ if $\sum_{i=1}^{k-1}|v_i| \leq p-2$, $v_{k-1} \leq |p-2|$, and $v_i \neq 0$ for all $i \neq k$.  See Figure 3.  Note that $(v_1, \dots, v_{k-1},0)$ is in $\mathcal{E}_k\p(p)$ because $(v_1, \dots, v_{k-1})$ is in $\mathcal{E}_{k-1}(p-1)$, as $v_i \neq 0$ for all $1 \leq i \leq k-1$.

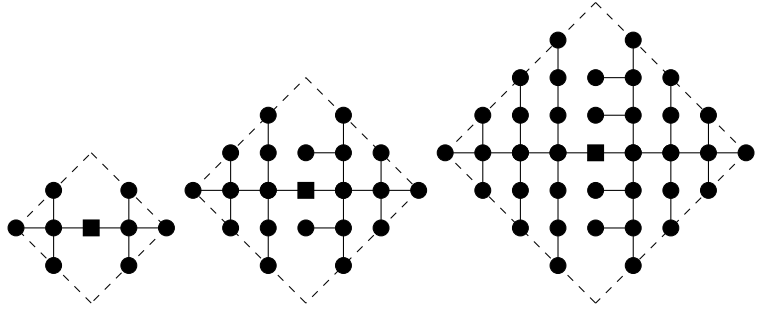
\begin{figure}
\begin{centering}

\begin{tikzpicture}[scale=0.5]

\draw [dashed] (2,0) -- (0,-2);
\draw [dashed] (0,-2) -- (-2,0);
\draw [dashed] (-2,0) -- (0,2);
\draw [dashed] (0,2) -- (2,0);

\vertex (-2,0) at (-2,0) {};
\vertex (-1,0) at (-1,0) {};
\vertex (-1,1) at (-1,1) {};  
\vertex (-1,-1) at (-1,-1) {};

\vertex[shape=rectangle] (0,0) at (0,0) {};

\vertex (2,0) at (2,0) {};
\vertex (1,0) at (1,0) {};
\vertex (1,1) at (1,1) {};  
\vertex (1,-1) at (1,-1) {};

\path
	(-2,0) edge (-1,0)
	
	(-1,0) edge (-1,-1)
	(-1,0) edge (-1,1)
	
	(0,0) edge (-1,0)
	(0,0) edge (1,0)
	
	(1,0) edge (1,-1)
	(1,0) edge (1,1)

	(2,0) edge (1,0)

	 ;   
	 
\end{tikzpicture}
\begin{tikzpicture}[scale=0.5]

\draw [dashed] (3,0) -- (0,-3);
\draw [dashed] (0,-3) -- (-3,0);
\draw [dashed] (-3,0) -- (0,3);
\draw [dashed] (0,3) -- (3,0);

\vertex (-3,0) at (-3,0) {};

\vertex (-2,0) at (-2,0) {};
\vertex (-2,1) at (-2,1) {};  
\vertex (-2,-1) at (-2,-1) {};

\vertex (-1,0) at (-1,0) {};
\vertex (-1,0) at (-1,0) {};
\vertex (-1,1) at (-1,1) {};  
\vertex (-1,-1) at (-1,-1) {};
\vertex (-1,2) at (-1,2) {};  
\vertex (-1,-2) at (-1,-2) {};

\vertex[shape=rectangle]  (0,0) at (0,0) {};
\vertex (0,1) at (0,1) {};
\vertex (0,-1) at (0,-1) {};

\vertex (1,0) at (1,0) {};
\vertex (1,0) at (1,0) {};
\vertex (1,1) at (1,1) {};  
\vertex (1,-1) at (1,-1) {};
\vertex (1,2) at (1,2) {};  
\vertex (1,-2) at (1,-2) {};

\vertex (2,0) at (2,0) {};
\vertex (2,1) at (2,1) {};  
\vertex (2,-1) at (2,-1) {};

\vertex (3,0) at (3,0) {};

\path
	(-3,0) edge (-2,0)
	
	(-2,0) edge (-2,-1)
	(-2,0) edge (-2,1)
	
	(-2,0) edge (-1,0)
	
	(-1,0) edge (-1,-1)
	(-1,0) edge (-1,1)
	(-1,-1) edge (-1,-2)
	(-1,1) edge (-1,2)
	
	(0,0) edge (-1,0)
	(0,0) edge (1,0)
	(0,1) edge (1,1)
	(0,-1) edge (1,-1)

	(1,0) edge (1,-1)
	(1,0) edge (1,1)
	(1,-1) edge (1,-2)
	(1,1) edge (1,2)
	
	(2,0) edge (1,0)

	(2,0) edge (2,-1)
	(2,0) edge (2,1)

	(3,0) edge (2,0)

	 ;   
	 
\end{tikzpicture}
\begin{tikzpicture}[scale=0.5]
\draw [dashed] (4,0) -- (0,-4);
\draw [dashed] (0,-4) -- (-4,0);
\draw [dashed] (-4,0) -- (0,4);
\draw [dashed] (0,4) -- (4,0);

\vertex (-4,0) at (-4,0) {};

\vertex (-3,0) at (-3,0) {};
\vertex (-3,1) at (-3,1) {};  
\vertex (-3,-1) at (-3,-1) {};

\vertex (-2,0) at (-2,0) {};
\vertex (-2,0) at (-2,0) {};
\vertex (-2,1) at (-2,1) {};  
\vertex (-2,-1) at (-2,-1) {};
\vertex (-2,2) at (-2,2) {};  
\vertex (-2,-2) at (-2,-2) {};

\vertex (-1,0) at (-1,0) {};
\vertex (-1,0) at (-1,0) {};
\vertex (-1,1) at (-1,1) {};  
\vertex (-1,-1) at (-1,-1) {};
\vertex (-1,2) at (-1,2) {};  
\vertex (-1,-2) at (-1,-2) {};
\vertex (-1,3) at (-1,3) {};  
\vertex (-1,-3) at (-1,-3) {};

\vertex[shape=rectangle]  (0,0) at (0,0) {};
\vertex (0,1) at (0,1) {};
\vertex (0,2) at (0,2) {};
\vertex (0,-1) at (0,-1) {};
\vertex (0,-2) at (0,-2) {};

\vertex (1,0) at (1,0) {};
\vertex (1,0) at (1,0) {};
\vertex (1,1) at (1,1) {};  
\vertex (1,-1) at (1,-1) {};
\vertex (1,2) at (1,2) {};  
\vertex (1,-2) at (1,-2) {};
\vertex (1,3) at (1,3) {};  
\vertex (1,-3) at (1,-3) {};

\vertex (2,0) at (2,0) {};
\vertex (2,0) at (2,0) {};
\vertex (2,1) at (2,1) {};  
\vertex (2,-1) at (2,-1) {};
\vertex (2,2) at (2,2) {};  
\vertex (2,-2) at (2,-2) {};

\vertex (3,0) at (3,0) {};
\vertex (3,1) at (3,1) {};  
\vertex (3,-1) at (3,-1) {};

\vertex (4,0) at (4,0) {};

\path
	(-4,0) edge (-3,0)
	
	(-3,0) edge (-3,-1)
	(-3,0) edge (-3,1)
	(-3,0) edge (-2,0)
	
	(-2,0) edge (-2,-1)
	(-2,0) edge (-2,1)
	(-2,-1) edge (-2,-2)
	(-2,1) edge (-2,2)
	(-2,0) edge (-1,0)
	
	(-1,0) edge (-1, -1)
	(-1,0) edge (-1, 1)
	(-1,1) edge (-1, 2)
	(-1, 2) edge (-1, 3)
	(-1,-1) edge (-1, -2)
	(-1,-2) edge (-1,-3)
	
	(0,0) edge (-1,0)
	(0,0) edge (1,0)
	(0,1) edge (1,1)
	(0,2) edge (1,2)
	(0,-1) edge (1,-1)
	(0,-2) edge (1,-2)	
		
	(1,0) edge (1, -1)
	(1,0) edge (1, 1)
	(1,1) edge (1, 2)
	(1, 2) edge (1, 3)
	(1,-1) edge (1, -2)
	(1,-2) edge (1,-3)
	
	(2,0) edge (2,-1)
	(2,0) edge (2,1)
	(2,-1) edge (2,-2)
	(2,1) edge (2,2)
	(2,0) edge (1,0)
	
	(2,0) edge (3,0)

	(3,0) edge (3,-1)
	(3,0) edge (3,1)
	
	(4,0) edge (3,0)

	 ;   

\end{tikzpicture}

\end{centering}
\caption{Construction $\mathcal{E}\p_2(p)$ for $\D = 4$ and $D = 2,3,4$}

\end{figure}

Now we consider the maximum degree of $\mathcal{E}_k\p (p)$.  Of the vertices in $\mathcal{E}\p_k(p)$ that are not in the planes $x_k=1$ or $x_k = 0$, the only vertices whose degree increased are $(0,0, \dots, 0, j)$ for $-(p-1) \leq j \leq p-1$.  Because those were the centers of $\mathcal{E}\p_{k-1}(p-i)$ for $1 \leq i \leq p-2$ they had degree 2 by condition 2 of Proposition \ref{Eprime}, and thus now have degree 4.  The vertices in $\mathcal{E}\p_k(p)$ in the plane $x_k= 0$ all have degree 1.  Finally, the vertices in the plane $x_k = 1$ were in the graph $\mathcal{E}_{k-1} (p-1)$, and the only ones that had their degree increased were those such that $x_i \neq 0$ for all $1 \leq i \leq k$.  Therefore by condition 1 of Proposition \ref{Ekp}, they were originally at most degree 2 and are now at most degree 3.  Thus the maximum degree of $\mathcal{E}\p_k (p)$ is 4, satisfying condition 1 of Proposition \ref{Eprime}.  Note also that the origin only connects to two other nodes, $(0,0, \dots, 0, -1,)$ and $(0,0, \dots, 0, 1)$ and so has degree 2, satisfying condition 2 of Proposition \ref{Eprime}.

Next, we check the condition on the diameter.  Let $v$ be some vertex, not the origin, in $\mathcal{E}\p_k(p)$.  Assume $v$ is located in the plane $x_k= i$ in $\mathcal{E}\p_{k-1}(p-|i|)$ for $-(p-2) \leq i \leq p-2$ when $i \neq 0, 1$.  Then by condition 4 in Proposition \ref{Eprime}, we know that $v$ is at most distance $p-|i|$ from $(0,0, \dots, 0, i) \in \Z^k$ and furthermore $(0,0, \dots, 0, i)$ is distance $|i|$ from $(0,0, \dots, 0) \in \Z^k$.  Thus $v$ has distance at most $p$ from $(0,0, \dots, 0)$, and $\mathcal{E}\p_k(p)$ satisfies condition 4 of Proposition \ref{Eprime}. If $v = (v_1, \dots, v_{k-1},0)$ is in the plane $x_k = 0$, then note that $v_{k-1} \leq |p-2|$, and it is distance 1 from $(v_1, \dots, v_{k-1}, 1)$ which is distance at most $p-2$ from $(0,0, \dots, 0, 1)$ which is distance 1 from the origin, so $v$ is at most distance $p$ from the origin.  If $v$ is in the plane $x_k = 1$ then by condition 3 of Proposition \ref{Ekp}, $v$ is at most distance $p -1$ from the point $(0,0, \dots, 1)$ which is distance $1$ from the origin.  Thus $\mathcal{E}\p_k(p)$ satisfies condition 3 of Proposition \ref{Eprime}. 

Finally, we count the number of vertices in $\mathcal{E}\p_k(p)$.  By condition 4 of Proposition \ref{Eprime}, we know that $|\mathcal{E}\p_{k-1}(p-i)| = |B^e_{k-1}(p-i)| + O (p^{k-3})$ and by condition 4 of Proposition \ref{Ekp} we know that $| \mathcal{E}_{k-1}(p-1)| = |B^e_{k-1}(p-1)| + O (p^{k-3})$.  We also included all of the vertices $(v_1, \dots, v_{k})$ in $B^e_{k-1}(p)$ in the plane $x_k=0$ except those such that $\sum_{i=1}^{k-1} v_i = p-1$, $v_{k-1} = p-1$ or $p$, and those with $v_i = 0$ for some $ 1 \leq i \leq k-1$.  These are all sets of size $O(p^{k-2})$.  Therefore 

\begin{eqnarray*} 
| \mathcal{E}\p_k(p)| &=& 2 \left( \sum_{i=2}^{p-2} | \mathcal{E}\p_{k-1}(p-i) | \right) + |\mathcal{E}\p_{k-1} (p-1)| + | \mathcal{E}_{k-1}(p-1)| + O (p^{k-2}) \\
 &=& \sum_{i=2}^{p-2} \left( |B^e_{k-1}(p-i)| + O(p^{k-3}) \right) + | B^e_{k-1}(p-1)| + O (p^{k-3}) + | B^e _{k-1}(p-1)| + O (p^{k-2}) \\
 &=& 2 \sum_{i=1}^{p-2} \left( |B^e_{k-1} (p-i)| + O (p^{k-3}) \right) + O (p^{k-2}) \\
&=& 2 \sum_{i=1}^{p-2} \left( \frac{2^{k-1}(p-i)^{k-1}}{(k-1)!} + \frac{2^{k-2}(p-i)^{k-2}}{(k-2)!} + O (p^{k-3}) \right) + O (p^{k-2}) \\
&=& \frac{2^kp^k}{k!}+\frac{2^{k-1}p^{k-1}}{(k-1)!} + O (p^{k-2}).
\end{eqnarray*}

Thus we have checked all four of the conditions, and we can continue in the manner, giving us the lower bound for $N^e_k(\D, p)$ when $\D = 4$ stated in Theorem 1 Part 4.
\end{proof}

\subsection{The Odd Diameter Case}

A similar pair of constructions gives us the lower bounds for the odd diameter case $N_k^o(\D, p)$ when $\D =4$, stated in Theorem 2 Part 4.

\begin{prop} There exists a graph $\mathcal{O}_{k}(p)$ in $ (\Z+\frac{1}{2}) \times  \Z^{k-1}$ satisfying the following conditions:

\begin{enumerate}

\item The degree of any vertex $v= (v_1, \dots, v_{k})$ which is not at $(\pm 1/2,0, \dots,0 ) \in  (\Z +\frac{1}{2}) \times \Z^{k-1}$ is at most 4 if $v_1 = \pm 1/2$ and less than or equal to $2$ otherwise;

\item The vertices at $(\pm 1/2,0, \dots, 0) \in (\Z +\frac{1}{2}) \times \Z^{k-1}$ have degree $2$;

\item Any vertex in $\mathcal{O}_{k}(p)$ is at most distance $p$ from one of the two vertices $(\pm 1/2,0, \dots, 0) \in (\Z + \frac{1}{2} ) \times \Z^{k-1}$, and at most distance $p+1$ from the other;

\item $|\mathcal{O}_{k}(p)| = \frac{2^{k} p^{k}}{(k)!}   + O (p^{k-1})$.

\end{enumerate}
\label{Okp}
\end{prop}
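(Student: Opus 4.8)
The plan is to mirror the proof of Proposition~\ref{Ekp}, inducting on the dimension and stacking lower-dimensional copies along the $x_k$-axis, but now carrying along the \emph{two} distinguished vertices $s^{+}=(\tfrac12,0,\dots,0)$ and $s^{-}=(-\tfrac12,0,\dots,0)$ in place of a single origin. For the base case $k=1$ I would let $\mathcal{O}_1(p)$ be the induced path on the half-integers in $[-(p+\tfrac12),\,p+\tfrac12]$: then $s^{\pm}=\pm\tfrac12$ each have degree $2$; a vertex $m+\tfrac12$ with $0\le m\le p$ is at distance $m\le p$ from $s^{+}$ and $m+1\le p+1$ from $s^{-}$ (and symmetrically on the negative side); and $|\mathcal{O}_1(p)|=2p+2=\tfrac{2^1 p^1}{1!}+O(1)$. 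So all four conditions hold in dimension $1$.

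For the inductive step, assuming $\mathcal{O}_{k-1}$ has been constructed, I would place a copy of $\mathcal{O}_{k-1}(p-|i|)$ in the hyperplane $x_k=i$ for each $i$ with $1\le|i|\le p-2$ (so that its two centers sit at $(\pm\tfrac12,0,\dots,0,i)$), adjoin the two new vertices $s^{\pm}=(\pm\tfrac12,0,\dots,0)$ in the hyperplane $x_k=0$, and join consecutive vertices along each of the two ``spines'' $\{(\pm\tfrac12,0,\dots,0,j):-(p-2)\le j\le p-2\}$. Conditions 1 and 2 then follow by a purely local check, exactly as for $\mathcal{E}_k$: a vertex with $x_1\neq\pm\tfrac12$ lies in a single copy, is not one of that copy's centers, receives no new edge, and so keeps degree $\le2$; a center of a copy had degree $2$ and gains at most two spine edges, hence has degree $\le4$; any other vertex with $x_1=\pm\tfrac12$ keeps the degree $\le4$ inherited from its copy; and $s^{\pm}$ are joined only to $(\pm\tfrac12,0,\dots,0,\pm1)$, so they have degree $2$. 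The vertex count is computed exactly as in Proposition~\ref{Ekp}: since the copies and the two added vertices occupy distinct hyperplanes $x_k=\mathrm{const}$ and the spine edges add no vertices, $|\mathcal{O}_k(p)|=2+2\sum_{i=1}^{p-2}|\mathcal{O}_{k-1}(p-i)|=2+2\sum_{m=2}^{p-1}\bigl(\tfrac{2^{k-1}m^{k-1}}{(k-1)!}+O(m^{k-2})\bigr)=\tfrac{2^k p^k}{k!}+O(p^{k-1})$, which is condition 4.

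The substantive point is condition 3, and this is where the two-center bookkeeping has to be threaded carefully. I would prove by induction the sharpened statement that every vertex with $x_1>0$ is within distance $p$ of $s^{+}$ and within $p+1$ of $s^{-}$ (and symmetrically with the roles of $s^{+},s^{-}$ swapped when $x_1<0$), together with the auxiliary bound $d_{\mathcal{O}_k(p)}(s^{+},s^{-})\le 2k-1$. Given $v$ with $v_1>0$ lying in the copy $\mathcal{O}_{k-1}(p-i)$ at level $x_k=i>0$ (the case $i<0$ being the reflection $x_k\mapsto-x_k$), the sharpened hypothesis applied inside that copy puts $v$ within $p-i$ of the copy's ``$+$''-center $c^{+}_i=(\tfrac12,0,\dots,0,i)$ and within $p-i+1$ of its ``$-$''-center $c^{-}_i=(-\tfrac12,0,\dots,0,i)$; because both spines run the full range $[-(p-2),p-2]$, we have $d(c^{+}_i,s^{+})\le i$ and $d(c^{-}_i,s^{-})\le i$, so $d(v,s^{+})\le(p-i)+i=p$ and $d(v,s^{-})\le(p-i+1)+i=p+1$. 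The auxiliary bound comes from routing through the level-$1$ copy, $d(s^{+},s^{-})\le d(s^{+},c^{+}_1)+d(c^{+}_1,c^{-}_1)+d(c^{-}_1,s^{-})\le 1+(2(k-1)-1)+1=2k-1$, using the inductive form of condition 3 within $\mathcal{O}_{k-1}(p-1)$. This asymmetric version of condition 3 is exactly what later delivers the diameter bound $2p+1$: for any two vertices $u,v$, pick a center lying within $p$ of $v$; then $u$ lies within $p+1$ of that same center, so $d(u,v)\le(p+1)+p=2p+1$.

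I expect no conceptual obstacle, only the need to be disciplined about the ``$p$ to one center, $p+1$ to the other'' asymmetry — in particular to carry \emph{both} spines at full length, so that a vertex whose nearer copy-center is $c^{-}_i$ can still reach $s^{-}$ in $i$ steps even when $i>0$ — and about the small-$p$ configurations (finitely many values of $p$ below a threshold depending on $k$, where the stacking range is empty or nearly so): there the four conditions are checked directly on the degenerate graph, and any discrepancy is absorbed into the $O(p^{k-1})$ error term just as in Propositions~\ref{Ekp} and~\ref{Eprime}.
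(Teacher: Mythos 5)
Your proposal follows essentially the same route as the paper: the identical base case, the same stacking of copies $\mathcal{O}_{k-1}(p-|i|)$ along two spines at $x_1=\pm\tfrac12$ with two new degree-$2$ centers, the same local degree check, and the same recursive count. Your sharpened form of condition 3 (tracking the near center by the sign of $x_1$) is just a cleaner bookkeeping of the paper's ``without loss of generality'' step, and the auxiliary bound $d(s^+,s^-)\le 2k-1$ is a harmless extra (in fact a more careful version of the paper's claim that the two centers are distance $3$ apart).
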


\begin{proof}
We start by constructing a base case.  Recall that in the odd case we consider $G^o$ with vertices in $(\Z + \frac{1}{2}) \times \Z^{k-1}$ where vertices have fractional coordinates in the first dimension and integer coordinates in all others.  In dimension $k=1$, define $\mathcal{O}_1(p)$ to be the induced subgraph given by vertices in the interval $[-p-1/2,p+1/2]$ as shown in Figure 4.  Now assume that we have constructed $\mathcal{O}_{k-1}(p)$ satisfying the conditions in Proposition \ref{Okp}.

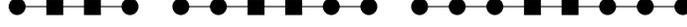
\begin{figure}[h]
\begin{centering}

\begin{tikzpicture}[scale=0.5]

\vertex[shape=rectangle] (.5,0) at (.5,0) {};
\vertex[shape=rectangle] (-.5,0) at (-.5,0) {};
\vertex (1.5,0) at (1.5,0) {};
\vertex (-1.5,0) at (-1.5,0) {};

\path
	(.5,0) edge (-.5,0)
	(-1.5,0) edge (-.5,0)
	(1.5,0) edge (.5,0)

	;

\end{tikzpicture} \,\,\,
\begin{tikzpicture}[scale=0.5]

\vertex[shape=rectangle] (.5,0) at (.5,0) {};
\vertex[shape=rectangle] (-.5,0) at (-.5,0) {};
\vertex (1.5,0) at (1.5,0) {};
\vertex (-1.5,0) at (-1.5,0) {};
\vertex (2.5,0) at (2.5,0) {};
\vertex (-2.5,0) at (-2.5,0) {};

\path
	(.5,0) edge (-.5,0)
	(-1.5,0) edge (-.5,0)
	(1.5,0) edge (.5,0)
	(-2.5,0) edge (-1.5,0)
	(2.5,0) edge (1.5,0)
	;

\end{tikzpicture} \,\,\,
\begin{tikzpicture}[scale=0.5]

\vertex[shape=rectangle] (.5,0) at (.5,0) {};
\vertex[shape=rectangle] (-.5,0) at (-.5,0) {};
\vertex (1.5,0) at (1.5,0) {};
\vertex (-1.5,0) at (-1.5,0) {};
\vertex (2.5,0) at (2.5,0) {};
\vertex (-2.5,0) at (-2.5,0) {};
\vertex (3.5,0) at (3.5,0) {};
\vertex (-3.5,0) at (-3.5,0) {};

\path
	(.5,0) edge (-.5,0)
	(-1.5,0) edge (-.5,0)
	(1.5,0) edge (.5,0)
	(-2.5,0) edge (-1.5,0)
	(2.5,0) edge (1.5,0)
	(-3.5,0) edge (-2.5,0)
	(3.5,0) edge (2.5,0)
	;

\end{tikzpicture}

\end{centering}
\caption{$\mathcal{O}_1(p)$ for $p = 1, 2, 3$}
\end{figure}

\begin{figure}
\begin{centering}

\begin{tikzpicture}[scale=.5]
\draw [dashed] (2.5,0) -- (0,-2.5);
\draw [dashed] (0,-2.5) -- (-2.5,0);
\draw [dashed] (-2.5,0) -- (0,2.5);
\draw [dashed] (0,2.5) -- (2.5,0);

\vertex (-2, .5) at (-2, .5) {};
\vertex (-2, -.5) at (-2, -.5) {};
\vertex (-1, 1.5) at (-1, 1.5){};
\vertex (-1, .5) at (-1, .5){};
\vertex (-1, -1.5) at (-1, -1.5){};
\vertex (-1, -.5) at (-1, -.5){};
\vertex[shape=rectangle] (0, .5) at (0, .5){};
\vertex[shape=rectangle] (0, -.5) at (0, -.5){};
\vertex (1, 1.5) at (1, 1.5){};
\vertex (1, .5) at (1, .5){};
\vertex (1, -1.5) at (1, -1.5){};
\vertex (1, -.5) at (1, -.5){};
\vertex (2, .5) at (2, .5){};
\vertex (2, -.5) at (2, -.5){};

\path
	(-2,.5) edge (-1, .5)
	(-2, -.5) edge (-1, -.5)
	(-1, .5) edge (-1, -.5)
	(-1, .5) edge (-1, 1.5)
	(-1, -.5) edge (-1, -1.5)
	(-1,.5) edge (0, .5)
	(-1, -.5) edge (0, -.5)
	(1,.5) edge (0, .5)
	(1, -.5) edge (0, -.5)
	(2,.5) edge (1, .5)
	(1, .5) edge (1, -.5)
	(1, .5) edge (1, 1.5)
	(1, -.5) edge (1, -1.5)
	(2, -.5) edge (1, -.5)

	;

\end{tikzpicture}
\begin{tikzpicture}[scale=.5]
\draw [dashed] (3.5,0) -- (0,-3.5);
\draw [dashed] (0,-3.5) -- (-3.5,0);
\draw [dashed] (-3.5,0) -- (0,3.5);
\draw [dashed] (0,3.5) -- (3.5,0);

\vertex (-3, .5) at (-3, .5) {};
\vertex (-3, -.5) at (-3, -.5) {};

\vertex (-2, 1.5) at (-2, 1.5){};
\vertex (-2, .5) at (-2, .5){};
\vertex (-2, -1.5) at (-2, -1.5){};
\vertex (-2, -.5) at (-2, -.5){};

\vertex (-1, 1.5) at (-1, 1.5){};
\vertex (-1, .5) at (-1, .5){};
\vertex (-1, -1.5) at (-1, -1.5){};
\vertex (-1, -.5) at (-1, -.5){};
\vertex (-1, 2.5) at (-1, 2.5){};
\vertex (-1, -2.5) at (-1, -2.5){};

\vertex[shape=rectangle] (0, .5) at (0, .5){};
\vertex[shape=rectangle] (0, -.5) at (0, -.5){};

\vertex (1, 1.5) at (1, 1.5){};
\vertex (1, .5) at (1, .5){};
\vertex (1, -1.5) at (1, -1.5){};
\vertex (1, -.5) at (1, -.5){};
\vertex (1, 2.5) at (1, 2.5){};
\vertex (1, -2.5) at (1, -2.5){};

\vertex (2, 1.5) at (2, 1.5){};
\vertex (2, .5) at (2, .5){};
\vertex (2, -1.5) at (2, -1.5){};
\vertex (2, -.5) at (2, -.5){};

\vertex (3, .5) at (3, .5){};
\vertex (3, -.5) at (3, -.5){};

\path

	(-3, .5) edge (-2, .5)
	(-3, -.5) edge (-2, -.5)

	(-2, .5) edge (-2, -.5)
	(-2,.5) edge (-1, .5)
	(-2, -.5) edge (-1, -.5)
	(-2, 1.5) edge (-2, .5)
	(-2, -1.5) edge (-2, -.5)

	(-1, .5) edge (-1, -.5)
	(-1, .5) edge (-1, 1.5)
	(-1, -.5) edge (-1, -1.5)
	(-1,.5) edge (0, .5)
	(-1, -.5) edge (0, -.5)
	(-1, -2.5) edge (-1, -1.5)
	(-1, 2.5) edge (-1, 1.5)
	
	(1,.5) edge (0, .5)
	(1, -.5) edge (0, -.5)

	(1, .5) edge (1, -.5)
	(1, .5) edge (1, 1.5)
	(1, -.5) edge (1, -1.5)
	(1, -2.5) edge (1, -1.5)
	(1, 2.5) edge (1, 1.5)

	(2, .5) edge (2, -.5)
	(2,.5) edge (1, .5)
	(2, -.5) edge (2, .5)
	(2, -.5) edge (1, -.5)
	(2, 1.5) edge (2, .5)
	(2, -1.5) edge (2, -.5)
	
	(3, .5) edge (2, .5)
	(3, -.5) edge (2, -.5)

	;

\end{tikzpicture}

\caption{Construction $\mathcal{O}_2(p)$ for $\D = 4$ and $p=2,3$}
\end{centering}
\end{figure}

We construct $\mathcal{O}_k(p)$ from $\mathcal{O}_{k-1}(p)$ as follows: at $x_k = \pm i$, for $1 \leq i \leq p-2$, place a copy of $\mathcal{O}_{k-1}(p-i)$.  Connect $(1/2,0, \dots 0, j) \in \Z^k$ with an edge to $(1/2,0, \dots, 0, j +1)$ when $-(p-2) \leq j < (p-2)$, adding the vertex $(1/2,0, \dots ,0) \in \Z^k$ to $\mathcal{O}_k(p)$, and similarly, connect $(-1/2,0, \dots, 0, j) \in \Z^k$ with an edge to $(-1/2,0, \dots, 0, j +1)$ when $-(p-2) \leq j < (p-2)$, adding the vertex $(-1/2,0, \dots, 0,0) \in \Z^k$ to $\mathcal{O}_k(p)$.  Figure 5 shows the construction of $\mathcal{O}_2(p)$.

Then the only vertices which we added edges to were the vertices at $(\pm 1/2,0, \dots, 0, j)$ for $-(p-2) \leq j \leq p-2$.  When $j \neq 0$, by condition 2 of Proposition \ref{Okp} this vertex had degree $2$, and we added $2$ edges, leaving the degree bounded by 4.  When $j =0$, we constructed $\mathcal{O}_k(p)$ such that $(1/2,0, \dots, 0) \in \Z^k$ connects to $(1/2,0, \dots, 0, \pm 1)$, and  $(-1/2, 0, \dots, 0, \pm 1)$  so that it connects to $(-1/2, 0, \dots, 0, \pm 1)$ so they each have degree 2, satisfying condition 2 in Proposition \ref{Okp}.

Now we check the conditions on the diameter.  Note that in order to show $\mathcal{O}_k(p)$ has diameter $2p+1$, it suffices to check condition 3 in Proposition \ref{Okp} that each vertex in $\mathcal{O}_k(p)$ is at most distance $p$ from one of $(\pm 1/2,0, \dots, 0)$ and $p+1$ from the other, because any two vertices in $\mathcal{O}_k(p)$ are either both distance at most $p$ from one of $(\pm 1/2,0, \dots, 0)$, and thus distance at most $2p$ from each other, or for either $(\pm 1/2,0, \dots, 0)$, one vertex is distance at most $p$ away and the other is at most $p+1$ away, and the two vertices are distance at most $2p+1$ from each other.

For $k >1$, the vertices at $(\pm 1/2,0, \dots, 0)$ are distance $3$ apart.  Let $v$ be any vertex that is not at $(\pm 1/2,0, \dots, 0)$ in $\mathcal{O}_k(p)$.  Say $v$ is located in the plane $x_k= i$ in a copy of $\mathcal{O}_k(p-i)$ for $-(p-2) \leq i \leq p-2$.  Then by condition 3 of Proposition \ref{Okp}, we know that $v$ is at most distance $p-|i|$ from one of the vertices at $(\pm 1/2,0, \dots, 0)$, without loss of generality assume it is, $(1/2,0,\dots, 0, i)$ and $p+1-|i|$ from the other, $(-1/2,0,\dots, 0, i)$.  Then $(1/2,0, \dots, 0, i)$ is distance $|i|$ from $(1/2,0,\dots, 0, 0)$, and $(-1/2,0, \dots, 0, i)$  is distance $|i| $ from $(1/2,0,\dots, 0, 0)$, showing that $v$ is at most distance $p$ from $(1/2,0, \dots, 0)$ and distance $p+1$ from $(-1/2, 0, \dots, 0)$.

Finally, we count the number of vertices in $\mathcal{O}_k(p)$.  By condition 4 of Proposition \ref{Okp}, we know that $|\mathcal{O}_{k-1}(p-i)| = | B^o_{k-1}(p-i)| + O (p^{k-2})$.  Therefore

\begin{eqnarray*}
|\mathcal{O}_k (p)| &=& 2\sum_{i=1}^{p-2}|\mathcal{O}_{k-1}(p-i)| +2 \\
&=& 2 \sum_{i=1}^{p-2}\left( |B^o_{k-1}(p-i)| + O (p^{k-2})\right) +2\\
&=& 2 \sum_{i=1}^{p-2}\left( \frac{2^{k-1} (p-i)^{k-1}}{(k-1)!} + O (p^{k-2})\right) +2\\
&=& \frac{2^k p^k}{k!} + O (p^{k-1}).
\end{eqnarray*}
 Thus all four of the conditions in Proposition \ref{Okp} are satisfied, and we can construct $\mathcal{O}_k(p)$ for all $k$.
\end{proof}

\begin{prop}

There exists a graph $\mathcal{O}_{k}\p(p)$ centered at the origin satisfying the following conditions:

\begin{enumerate}

\item The degree of any vertex is bounded by 4;

\item The vertices at $(\pm 1/2,0, \dots, 0) \in (\Z + \frac{1}{2}) \times \Z^{k-1}$ have degree $2$;

\item Any vertex in $\mathcal{O}_{k}\p(p)$ is at most distance $p$ from one of the vertices $(\pm 1/2,0, \dots, 0) \in (\Z + \frac{1}{2}) \times \Z^{k-1}$ and at most distance $p+1$ from the other;

\item $|\mathcal{O}_{k}\p(p)| = \frac{2^{k}p^{k}}{k!} + \frac{2^{k}p^{k-1}}{(k-1)!}+ O (p^{k-2}).$ 

\end{enumerate}
\label{Oprime}
\end{prop}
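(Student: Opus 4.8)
The plan is to mirror the structure of the proof of Proposition~\ref{Eprime}, building $\mathcal{O}\p_k(p)$ inductively from $\mathcal{O}\p_{k-1}(p)$ and $\mathcal{O}_{k-1}(p)$ in exactly the same way $\mathcal{E}\p_k(p)$ was built from $\mathcal{E}\p_{k-1}(p)$ and $\mathcal{E}_{k-1}(p)$. For the base case $k=1$, take $\mathcal{O}\p_1(p)$ to be the induced subgraph on the interval $[-p-1/2, p+1/2]$ in $\Z+\frac{1}{2}$; one checks directly that conditions 1--4 hold, with $|\mathcal{O}\p_1(p)| = 2p+2$, matching $\frac{2p}{1} + \frac{2}{1} + O(1)$. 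For the inductive step, at $x_k = \pm i$ for $2 \leq i \leq p-2$ place a copy of $\mathcal{O}\p_{k-1}(p-i)$; at $x_k = -1$ place a copy of $\mathcal{O}\p_{k-1}(p-1)$; at $x_k = 1$ place a copy of $\mathcal{O}_{k-1}(p-1)$. Then connect $(1/2,0,\dots,0,j)$ to $(1/2,0,\dots,0,j+1)$ and $(-1/2,0,\dots,0,j)$ to $(-1/2,0,\dots,0,j+1)$ for the appropriate range of $j$, adding the two "center" vertices $(\pm 1/2, 0, \dots, 0)$. Finally, as in the even case, fold in most of the vertices of $B^o_{k-1}(p-1)$ lying in the plane $x_k = 0$ by connecting $(v_1,\dots,v_{k-1},0)$ to $(v_1,\dots,v_{k-1},1)$ whenever the analogous conditions hold (distance constraint $\sum|v_i|$ bounded appropriately, $v_{k-1}$ not too large, and $v_i \neq \pm 1/2$ or $\neq 0$ in the coordinates where that would otherwise push the degree too high in the $x_k=1$ layer).

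Next I would verify the four conditions in the same order used for $\mathcal{E}\p_k(p)$. Degree: the only vertices off the planes $x_k = 0, 1$ whose degree changes are the two chains $(\pm 1/2, 0, \dots, 0, j)$, which were centers of copies of $\mathcal{O}\p_{k-1}$ hence had degree 2 by condition 2 of Proposition~\ref{Oprime}, and gain 2 edges to reach degree 4; the vertices in the plane $x_k = 0$ have degree 1; and the vertices in the plane $x_k = 1$, lying in $\mathcal{O}_{k-1}(p-1)$, get their degree raised by at most 1 exactly when they avoid the excluded coordinate pattern, so by condition 1 of Proposition~\ref{Okp} they go from at most 2 to at most 3. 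Diameter: the key point, already isolated in the statement of Proposition~\ref{Okp}, is that it suffices to show every vertex is distance $\leq p$ from one of $(\pm 1/2, 0, \dots, 0)$ and $\leq p+1$ from the other. A vertex in the layer $x_k = i$ is, by condition 3 of Proposition~\ref{Okp} (or~\ref{Oprime}), within $p - |i|$ of one of $(\pm 1/2, 0, \dots, 0, i)$ and $p+1-|i|$ of the other, and those are at distance $|i|$ from the respective $(\pm 1/2, 0, \dots, 0)$; for the $x_k = 0$ plane, route through the corresponding vertex in $x_k = 1$ and then down the center chain, picking up the same $+1$. Count: using condition 4 of Propositions~\ref{Okp} and~\ref{Oprime} to replace each $|\mathcal{O}\p_{k-1}(p-i)|$ and $|\mathcal{O}_{k-1}(p-1)|$ by $|B^o_{k-1}(\cdot)| + O(p^{k-3})$, plus the $|B^o_{k-1}(p-1)| + O(p^{k-2})$ contribution from the folded-in $x_k=0$ plane, the sum telescopes into
\[
|\mathcal{O}\p_k(p)| = 2\sum_{i=1}^{p-2}\left( \frac{2^{k-1}(p-i)^{k-1}}{(k-1)!} + \frac{2^{k-1}(p-i)^{k-2}}{(k-2)!} + O(p^{k-3}) \right) + O(p^{k-2}),
\]
where the crucial point is that the plane $x_k=0$ contributes a \emph{full} extra copy of $B^o_{k-1}(p-1)$ (not a half), so the coefficient of the second-order term doubles relative to the even case, yielding $\frac{2^k p^k}{k!} + \frac{2^k p^{k-1}}{(k-1)!} + O(p^{k-2})$ by Proposition~2.

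The main obstacle I anticipate is bookkeeping in the $x_k = 1$ layer: there we use $\mathcal{O}_{k-1}(p-1)$ rather than $\mathcal{O}\p_{k-1}(p-1)$ precisely so that its interior vertices have slack in their degree (at most 2 by condition 1 of Proposition~\ref{Okp}), leaving room to attach the $x_k=0$ edges without exceeding degree 4. One must check that the set of vertices we exclude from the fold-in — those with a zero coordinate in dimensions $2,\dots,k-1$, those on the outer $\ell^1$-sphere $\sum|v_i| = p-1$, and those with $v_{k-1}$ maximal — is genuinely of size $O(p^{k-2})$, and that none of the two "spine" vertices $(\pm 1/2, 0, \dots, 0, 1)$ themselves get an extra edge from the fold-in (they already have degree 4). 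This is entirely parallel to the even case, and a careful statement of which $O(p^{k-2})$-sized sets get dropped, together with the observation that the $x_k=0$ plane here is an $\ell^1$ ball of \emph{full dimension} $k-1$ rather than a lower-dimensional slice, is all that is needed. Having verified conditions 1--4, the induction closes and gives the claimed lower bound for $N^o_k(\D, p)$ when $\D = 4$, which combined with the upper bound of Proposition~1 and Proposition~2 establishes Theorem 2, Part 4.
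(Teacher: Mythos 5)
Your construction and verification mirror the paper's own proof essentially step for step: the same layering with $\mathcal{O}_{k-1}(p-1)$ placed at $x_k=1$ so its low-degree vertices can absorb the fold-in edges, the same inclusion of most of $B^o_{k-1}(p-1)$ in the plane $x_k=0$, and the same degree, diameter, and counting arguments (including reproducing the paper's own bookkeeping of the $x_k=0$ contribution), so the argument is correct. One small misattribution worth fixing: the doubling of the second-order coefficient relative to the even case does not come from the $x_k=0$ plane contributing a ``full'' rather than a ``half'' copy --- both the even and odd constructions fold in one full $(k-1)$-dimensional ball there --- but rather from the fact that $|B^o_{k-1}(q)|$ itself has second term $\frac{2^{k-1}q^{k-2}}{(k-2)!}$, twice the corresponding term of $|B^e_{k-1}(q)|$, and it is this larger term that enters the telescoping sum over the layers.
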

\begin{proof}
We start by constructing a base case.  In dimension $k=1$, define $\mathcal{O}\p_1(p)$ to be the induced subgraph given by vertices in the interval $[-p-1/2,p+1/2]$ as shown in Figure 4.  Now assume that we have constructed $\mathcal{O}\p_{k-1}(p)$ satisfying the conditions in Proposition \ref{Oprime}.

We construct $\mathcal{O}\p_k(p)$ from $\mathcal{O}\p_{k-1}(p)$ and $\mathcal{O}_{k-1}(p)$ as follows: at $x_k = \pm i$, for $1 \leq i \leq p-2$, $i \neq 1$, place a copy of $\mathcal{O}\p_{k-1}(p-i)$.  At $i = 1$ place a copy of $\mathcal{O}_{k-1}(p-1)$.  Connect $(1/2,0, \dots ,0, j) \in \Z^k$ with an edge to $(1/2,0, \dots, 0, j +1)$ when $-(p-2) \leq j < (p-2)$, adding the vertex $(1/2,0, \dots ,0) \in \Z^k$ to $\mathcal{O}_k(p)$, and similarly, connect $(-1/2,0, \dots, 0, j) \in \Z^k$ with an edge to $(-1/2,0, \dots, 0, j +1)$ when $-(p-2) \leq j < (p-2)$, adding the vertex $(-1/2,0, \dots, 0,0) \in \Z^k$ to $\mathcal{O}_k(p)$.  We also include most of the vertices in $B^o_{k-1}(p-1)$ in the plane $x_k =0$.  Let $v= (v_1, \dots , v_k)$ a vertex in the plane $x_k = 0$.  Connect $v$ to $(v_1, \dots, 1)$ if $\sum_{j=1}^k |v_j| \leq p-2$, $|v_{k-1}| \leq p -3/2$, and $v_1 \neq \pm 1/2$.   See Figure 6.

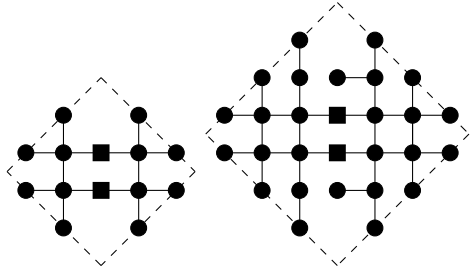
\begin{figure}
\begin{centering}

\begin{tikzpicture}[scale=.5]
\draw [dashed] (2.5,0) -- (0,-2.5);
\draw [dashed] (0,-2.5) -- (-2.5,0);
\draw [dashed] (-2.5,0) -- (0,2.5);
\draw [dashed] (0,2.5) -- (2.5,0);

\vertex (-2, .5) at (-2, .5) {};
\vertex (-2, -.5) at (-2, -.5) {};
\vertex (-1, 1.5) at (-1, 1.5){};
\vertex (-1, .5) at (-1, .5){};
\vertex (-1, -1.5) at (-1, -1.5){};
\vertex (-1, -.5) at (-1, -.5){};
\vertex[shape=rectangle] (0, .5) at (0, .5){};
\vertex[shape=rectangle] (0, -.5) at (0, -.5){};
\vertex (1, 1.5) at (1, 1.5){};
\vertex (1, .5) at (1, .5){};
\vertex (1, -1.5) at (1, -1.5){};
\vertex (1, -.5) at (1, -.5){};
\vertex (2, .5) at (2, .5){};
\vertex (2, -.5) at (2, -.5){};

\path
	(-2,.5) edge (-1, .5)
	(-2, -.5) edge (-1, -.5)
	(-1, .5) edge (-1, -.5)
	(-1, .5) edge (-1, 1.5)
	(-1, -.5) edge (-1, -1.5)
	(-1,.5) edge (0, .5)
	(-1, -.5) edge (0, -.5)
	(1,.5) edge (0, .5)
	(1, -.5) edge (0, -.5)
	(2,.5) edge (1, .5)
	(1, .5) edge (1, -.5)
	(1, .5) edge (1, 1.5)
	(1, -.5) edge (1, -1.5)
	(2, -.5) edge (1, -.5)

	;

\end{tikzpicture}
\begin{tikzpicture}[scale=.5]
\draw [dashed] (3.5,0) -- (0,-3.5);
\draw [dashed] (0,-3.5) -- (-3.5,0);
\draw [dashed] (-3.5,0) -- (0,3.5);
\draw [dashed] (0,3.5) -- (3.5,0);

\vertex (-3, .5) at (-3, .5) {};
\vertex (-3, -.5) at (-3, -.5) {};

\vertex (-2, 1.5) at (-2, 1.5){};
\vertex (-2, .5) at (-2, .5){};
\vertex (-2, -1.5) at (-2, -1.5){};
\vertex (-2, -.5) at (-2, -.5){};

\vertex (-1, 1.5) at (-1, 1.5){};
\vertex (-1, .5) at (-1, .5){};
\vertex (-1, -1.5) at (-1, -1.5){};
\vertex (-1, -.5) at (-1, -.5){};
\vertex (-1, 2.5) at (-1, 2.5){};
\vertex (-1, -2.5) at (-1, -2.5){};

\vertex[shape=rectangle] (0, .5) at (0, .5){};
\vertex[shape=rectangle] (0, -.5) at (0, -.5){};
\vertex (0, 1.5) at (0, 1.5) {};
\vertex (0, -1.5) at (0, -1.5) {};

\vertex (1, 1.5) at (1, 1.5){};
\vertex (1, .5) at (1, .5){};
\vertex (1, -1.5) at (1, -1.5){};
\vertex (1, -.5) at (1, -.5){};
\vertex (1, 2.5) at (1, 2.5){};
\vertex (1, -2.5) at (1, -2.5){};

\vertex (2, 1.5) at (2, 1.5){};
\vertex (2, .5) at (2, .5){};
\vertex (2, -1.5) at (2, -1.5){};
\vertex (2, -.5) at (2, -.5){};

\vertex (3, .5) at (3, .5){};
\vertex (3, -.5) at (3, -.5){};

\path

	(-3, .5) edge (-2, .5)
	(-3, -.5) edge (-2, -.5)

	(-2, .5) edge (-2, -.5)
	(-2,.5) edge (-1, .5)
	(-2, -.5) edge (-1, -.5)
	(-2, 1.5) edge (-2, .5)
	(-2, -1.5) edge (-2, -.5)

	(-1, .5) edge (-1, -.5)
	(-1, .5) edge (-1, 1.5)
	(-1, -.5) edge (-1, -1.5)
	(-1,.5) edge (0, .5)
	(-1, -.5) edge (0, -.5)
	(-1, -2.5) edge (-1, -1.5)
	(-1, 2.5) edge (-1, 1.5)
	
	(1,.5) edge (0, .5)
	(1, -.5) edge (0, -.5)
	(1, 1.5) edge (0, 1.5)
	(1, -1.5) edge (0, -1.5)

	(1, .5) edge (1, -.5)
	(1, .5) edge (1, 1.5)
	(1, -.5) edge (1, -1.5)
	(1, -2.5) edge (1, -1.5)
	(1, 2.5) edge (1, 1.5)

	(2, .5) edge (2, -.5)
	(2,.5) edge (1, .5)
	(2, -.5) edge (2, .5)
	(2, -.5) edge (1, -.5)
	(2, 1.5) edge (2, .5)
	(2, -1.5) edge (2, -.5)
	
	(3, .5) edge (2, .5)
	(3, -.5) edge (2, -.5)

	;

\end{tikzpicture}

\caption{Construction $\mathcal{O}\p_2(p)$ for $\D = 4$ and $p=2,3$}
\end{centering}
\end{figure}

Next we consider the maximum degree of $\mathcal{O}_k \p (p)$.  Of the vertices in $\mathcal{O}\p_k (p)$ that are not in the planes $x_k =1$ or $x_k = 0$, the only vertices whose degree increased are the vertices at $(\pm 1/2, 0, \dots, 0, j)$ for $-(p-2) \leq j \leq p-2$.  By condition 2 of Proposition \ref{Oprime} these vertices had degree $2$, and we added $2$ edges, leaving the degree bounded by 4.  The vertices in the plane $x_k = 0$ all have degree 1.  Finally, the vertices in the plane $x_k = 1$ were in the graph $\mathcal{O}_{k-1}(p-1)$ and the only ones that had their degree increased were those such that $x_k \neq \pm 1/2$.  Therefore by condition 1 of Proposition \ref{Okp}, they were originally degree 2 and are now degree 3.  Thus the maximum degree of $\mathcal{O}\p_k (p)$ is 4, satisfying condition 1 of Proposition \ref{Oprime}.  Note also that the vertices at $(\pm 1/2,0, \dots, 0) $ connect to only two other nodes and so have degree 2, satisfying condition 2 of Proposition \ref{Oprime}. 

Now we check the condition on the diameter.  Let $v$ be some vertex, not the origin, in $\mathcal{O}\p_k (p)$.  Assume $v$ is located in the plane $x_k = i$ in a copy of $\mathcal{O}\p_{k-1}(p-|i|)$ for $-(p-2) \leq i \leq p-2$ when $i \neq 0, 1$.  Then by condition 3 of Proposition \ref{Oprime}, we know that $v$ is at most distance $p- |i|$ from one of the vertices at $(\pm 1/2,0, \dots, 0)$, without loss of generality assume it is, $(1/2,0,\dots, 0, i)$ and $p+1-|i|$ from the other, $(-1/2,0,\dots, 0, i)$.  Then $(1/2,0, \dots, 0, i)$ is distance $|i|$ from $(1/2,0,\dots, 0)$, and $(-1/2,0, \dots, 0, i)$  is distance $|i| +1$ from $(1/2,0,\dots, 0)$, showing that $v$ is at most distance $p$ from $(1/2,0, \dots, 0)$ and distance $p+1$ from $(-1/2, 0, \dots, 0)$.  If $v$ is in the plane $x_k = 1$ then it is in a copy of $\mathcal{O}_{k-1}(p-1)$ and by condition 3 of Proposition \ref{Okp} without loss of generality it is distance at most $p-1$ from $(1/2, 0, \dots, 0, 1)$ and distance at most $p$ from $(-1/2, 0, \dots, 0, 1)$.  Therefore it is at most distance $p$ from $(1/2, 0, \dots, 0,0)$ and at most distance $p+1$ from $(-1/2, 0, \dots, 0, 0)$. If $v= (v_1, \dots, v_k)$ is in the plane $x_k = 0$ then note that $|v_{k-1}| \leq p - 3/2$.  We know $v$ is distance $1$ from $(v_1, \dots, v_{k-1}, 1)$ which, without loss of generality, is distance at most $p - 1$ from $(1/2, 0, \dots , 0, 1)$ and $p$ from $(-1/2, 0, \dots, 0, 1)$.  Therefore $v$ is at most distance $p$ from $(1/2, 0, \dots, 0)$ and $p+1$ from $(-1/2, 0, \dots, 0)$.  Thus $\mathcal{O}\p_k (p)$ satisfies condition 3 of Proposition \ref{Oprime}.

Finally, we count the number of vertices in $\mathcal{O}\p_k(p)$.  By condition 4 of Proposition \ref{Oprime}, we know that $|\mathcal{O}\p_{k-1}(p-i)| = |B^o_{k-1}(p-i)| + O (p^{k-3})$ and by condition 4 of Proposition \ref{Okp} we know that $| \mathcal{O}_{k-1}(p-1) = |B^o_{k-1}(p-1)| + O (p^{k-3})$.  We also included all of the vertices $(v_1, \dots, v_k)$ in $B^o_{k-1}(p)$ in the plane $x_k=0$ except those such that $v_{k-1} = p-1$ or $p$ (or when $k = 2,$ we excluded those such that $v_{k-1} = p+ 1/2$ or $v_{k-1} = p-1/2$), $\sum_{i=1}^k v_i = p-1$, and those with $v_1 = \pm 1/2$.  These are all sets of size $O(p^{k-2})$.  Therefore 

\begin{eqnarray*} 
| \mathcal{O}\p_k(p)| &=& 2 \left( \sum_{i=2}^{p-2} | \mathcal{O}\p_{k-1}(p-i) | \right) + |\mathcal{O}\p_{k-1} (p-1)| + | \mathcal{O}_{k-1}(p-1)| + O (p^{k-2})\\
 &=& \sum_{i=2}^{p-2} \left( |B^o_{k-1}(p-i)| + O(p^{k-3}) \right) + | B^o_{k-1}(p-1)| + O (p^{k-3}) + | B^o _{k-1}(p-1)| + O (p^{k-2}) \\
 &=& 2 \sum_{i=1}^{p-2} \left( |B^o_{k-1} (p-i)| + O (p^{k-3}) \right) + O (p^{k-2}) \\
&=& 2 \sum_{i=1}^{p-2} \left( \frac{2^{k-1}(p-i)^{k-1}}{(k-1)!} + \frac{2^{k-1}(p-i)^{k-2}}{(k-2)!} + O (p^{k-3}) \right) + O (p^{k-2}) \\
&=& \frac{2^kp^k}{k!}+\frac{2^{k}p^{k-1}}{(k-1)!} + O (p^{k-2}).
\end{eqnarray*}

Thus we have checked all four of the conditions, and we can continue in the manner, giving us the lower bound for $N^o_k(\D, p)$ when $\D = 4$ stated in Theorem 2.
\end{proof}

\section{Bounds when $\D =1,2,3$}

The above constructions cover all $\D \geq 4$.  Now we look at the small cases and prove the bounds in part three of Theorem 1 and Theorem 2.

\begin{lemma}

When $\D = 1$, $N_k^e(\D, p) = N_k^o(\D, p) = 2$.

\end{lemma}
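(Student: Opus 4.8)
The plan is to observe that the diameter bound forces connectedness, and then that connectedness together with $\D = 1$ leaves essentially no room. First I would recall that any subgraph for which the diameter is a finite number (here at most $2p$ or $2p+1$) must be connected, since the distance between vertices in different components is infinite. So the competitor subgraphs are exactly the connected subgraphs of the $k$-dimensional mesh with maximum degree at most $1$.

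Next I would classify connected graphs of maximum degree at most $1$: such a graph has at most one edge, hence is isomorphic either to $K_1$ (one vertex) or to $K_2$ (two vertices). In particular any such subgraph has at most $2$ vertices, so $N^e_k(\D,p) \le 2$ and $N^o_k(\D,p) \le 2$.

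For the matching lower bound, I would exhibit a valid subgraph on $2$ vertices: since $k \ge 1$ the mesh contains an edge (for $G^e$ take the two lattice points $(0,0,\dots,0)$ and $(1,0,\dots,0)$; for $G^o$ take $(1/2,0,\dots,0)$ and $(-1/2,0,\dots,0)$, or $(1/2,0,\dots,0)$ and $(3/2,0,\dots,0)$). The induced subgraph on these two points is a single edge, so it has maximum degree $1$ and diameter $1$, which is at most $2p$ and at most $2p+1$ for every $p \ge 1$. Hence $N^e_k(\D,p) \ge 2$ and $N^o_k(\D,p) \ge 2$, and combining with the upper bound gives equality.

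There is essentially no obstacle here; the only point requiring a word of care is the convention that a subgraph with a bounded diameter is by definition connected (equivalently, that a disconnected graph does not have finite diameter), which rules out taking a large set of isolated vertices or disjoint edges. Once that is granted, the argument is immediate and does not even use any of the constructions in Section 2.
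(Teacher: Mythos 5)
Your proposal is correct and follows essentially the same route as the paper: both classify the connected graphs of maximum degree at most $1$ as $K_1$ or $K_2$ and note that an edge exists in the mesh. You simply spell out the connectedness-from-finite-diameter step and the explicit choice of edge, which the paper leaves implicit.
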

\begin{proof}
The only connected graphs with degree at most $1$ are the single vertex and a pair of vertices joined by an edge, both of which are subgraphs of the mesh, so the maximum size of a connected subgraph of degree bounded by 1 and diameter bounded by $D$ is just 2.

\end{proof}

\begin{lemma}

 When $\D=2$, $N^e_k (\D, p) = 4p$ and  $N^o_k(\D, p) = 4p+2$.

\end{lemma}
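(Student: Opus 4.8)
The plan is to rely on the elementary structural fact that a connected graph with maximum degree at most $2$ is a path or a cycle, so the problem reduces to understanding how long a path or cycle can live in the mesh subject to the diameter bound. I would first record the two facts about such graphs that the argument needs. First, the path on $n$ vertices has diameter $n-1$ and the cycle on $n$ vertices has diameter $\lfloor n/2 \rfloor$. Second, the $k$-dimensional mesh is bipartite --- two-colour $\Z^k$ (resp.\ $(\Z+\frac{1}{2})\times\Z^{k-1}$) by the parity of the sum of the coordinates --- so every cycle that occurs as a subgraph of the mesh has \emph{even} length; equivalently, the mesh contains no odd cycle.

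\emph{Upper bounds.} Let $H$ be a connected subgraph of the mesh with all degrees at most $2$, so $H$ is a path or a cycle. If $H$ is a path with diameter at most $2p$ (resp.\ $2p+1$), then $|H| \le 2p+1$ (resp.\ $|H| \le 2p+2$), and for $p \ge 1$ this is at most $4p$ (resp.\ $4p+2$). If $H$ is a cycle on $n$ vertices, then $\lfloor n/2 \rfloor \le 2p$ (resp.\ $\le 2p+1$) gives $n \le 4p+1$ (resp.\ $n \le 4p+3$); since $n$ is even this sharpens to $n \le 4p$ (resp.\ $n \le 4p+2$). Hence $N^e_k(\D,p) \le 4p$ and $N^o_k(\D,p) \le 4p+2$.

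\emph{Lower bounds.} Here I take $k \ge 2$, so that two coordinate directions are available (the construction below uses them; the displayed formula is intended for $k\ge 2$). In the even case let $H$ be the induced subgraph of $G^e$ on the lattice points $(x_1,x_2,0,\dots,0)$ with $0 \le x_1,x_2 \le p$ and $x_1 \in \{0,p\}$ or $x_2 \in \{0,p\}$; this is the boundary of a $p\times p$ square, hence a single cycle on $4p$ vertices, so it has maximum degree $2$ and diameter $\lfloor 4p/2 \rfloor = 2p$, giving $N^e_k(\D,p) \ge 4p$. In the odd case let $H$ be the boundary rectangle in $G^o$ through the points $(x_1,x_2,0,\dots,0)$ with $x_1 \in \{\frac{1}{2},\frac{3}{2},\dots,p+\frac{1}{2}\}$, $x_2 \in \{0,1,\dots,p+1\}$, and $x_1 \in \{\frac{1}{2},p+\frac{1}{2}\}$ or $x_2 \in \{0,p+1\}$; this is a cycle on $4p+2$ vertices with maximum degree $2$ and diameter $2p+1$, so $N^o_k(\D,p) \ge 4p+2$. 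Combining the two directions yields the claimed equalities.

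The only point needing genuine (if brief) care is the parity observation: from the cycle-diameter formula alone one gets merely $n \le 4p+1$ and $n \le 4p+3$, and it is precisely the absence of odd cycles in the mesh that pins the answer down to $4p$ and $4p+2$ exactly. The remaining checks --- that the two displayed vertex sets really are single cycles of the stated lengths, and that their intrinsic diameters are $2p$ and $2p+1$ --- are routine bookkeeping about boundaries of rectangular grids.
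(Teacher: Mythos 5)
Your proof is correct and takes essentially the same route as the paper's: both reduce to the fact that a connected graph of maximum degree at most $2$ is a path or a cycle, use bipartiteness of the mesh to rule out odd cycles, and realize the bound by an explicit $4p$- or $(4p+2)$-cycle. Yours simply fills in the details (the explicit rectangle constructions, the parity sharpening, and the caveat that $k\ge 2$ is needed for the cycle to exist) that the paper's one-line argument leaves implicit.
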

\begin{proof}
If $\D = 2$ then the only connected subgraphs of the mesh satisfying this bound on degree are (not necessarily straight) lines or even cycles.  Any line of length $D$ has diameter $D$, and any cycle of length $2D$ has diameter $D$, so $N^e_k(2, p) = 4p$ and $N^o_k(2, p)= 4p+2$.
\end{proof}

\begin{figure}
\begin{centering}
\begin{tikzpicture}[scale = .5]

\vertex (-2, .5) at (-2, .5) {};
\vertex (-2, -.5) at (-2, -.5) {};
\vertex (-2, 1.5) at (-2, 1.5){};
\vertex(-2, -1.5) at (-2, -1.5){};
\vertex (-2, 2.5) at (-2, 2.5){};

\vertex (-1, 1.5) at (-1, 1.5){};
\vertex (-1, .5) at (-1, .5){};
\vertex (-1, -1.5) at (-1, -1.5){};
\vertex (-1, -.5) at (-1, -.5){};
\vertex (-1, 2.5) at (-1, 2.5){};

\vertex (0, .5) at (0, .5){};
\vertex (0, -.5) at (0, -.5){};
\vertex (0, 1.5) at (0, 1.5){};
\vertex(0, -1.5) at (0, -1.5){};
\vertex (0, 2.5) at (0, 2.5){};

\vertex (1, 1.5) at (1, 1.5){};
\vertex (1, .5) at (1, .5){};
\vertex (1, -1.5) at (1, -1.5){};
\vertex (1, -.5) at (1, -.5){};
\vertex (1, 2.5) at (1, 2.5){};

\vertex (2, .5) at (2, .5){};
\vertex (2, -.5) at (2, -.5){};
\vertex (2, 1.5) at (2, 1.5){};
\vertex(2, -1.5) at (2, -1.5){};
\vertex (2, 2.5) at (2, 2.5){};

\path
	(-2, -0.5) edge (-1, -0.5)
	(-1, -1.5) edge (0, -1.5)
	(0, -0.5) edge (1, -0.5)
	(1, -1.5) edge (2, -1.5)
	
	(1, 2.5) edge (1, 1.5)
	(1, 1.5) edge (1, .5)
	(1, .5) edge (1, -.5)
	(1, -.5) edge (1, -1.5)
	
	(2, 2.5) edge (2, 1.5)
	(2, 1.5) edge (2, .5)
	(2, .5) edge (2, -.5)
	(2, -.5) edge (2, -1.5)
	
	(-1, 2.5) edge (-1, 1.5)
	(-1, 1.5) edge (-1, .5)
	(-1, .5) edge (-1, -.5)
	(-1, -.5) edge (-1, -1.5)
	
	(-2, 2.5) edge (-2, 1.5)
	(-2, 1.5) edge (-2, .5)
	(-2, .5) edge (-2, -.5)
	(-2, -.5) edge (-2, -1.5)
	
	(0, 2.5) edge (0, 1.5)
	(0, 1.5) edge (0, .5)
	(0, .5) edge (0, -.5)
	(0, -.5) edge (0, -1.5)

	;

\end{tikzpicture}
\,\,\,
\begin{tikzpicture}[scale = .5]

\vertex (-3, .5) at (-3, .5) {};
\vertex (-3, -.5) at (-3, -.5) {};
\vertex (-3, -1.5) at (-3, -1.5){};
\vertex (-3, 1.5) at (-3, 1.5){};
\vertex (-3, 2.5) at (-3, 2.5){};
\vertex(-3, -2.5) at (-3, -2.5){};
\vertex(-3,- 3.5) at (-3, -3.5) {};

\vertex (-2, .5) at (-2, .5){};
\vertex (-2, -.5) at (-2, -.5){};
\vertex (-2, -1.5) at (-2, -1.5){};
\vertex (-2, 1.5) at (-2, 1.5){};
\vertex (-2, 2.5) at (-2, 2.5){};
\vertex(-2, -2.5) at (-2, -2.5){};
\vertex(-2,- 3.5) at (-2, -3.5) {};

\vertex(-1, .5) at (-1, .5){};
\vertex (-1, -.5) at (-1, -.5){};
\vertex (-1, 1.5) at (-1, 1.5){};
\vertex(-1, -1.5) at (-1, -1.5){};
\vertex (-1, 2.5) at (-1, 2.5){};
\vertex(-1, -2.5) at (-1, -2.5){};
\vertex(-1,- 3.5) at (-1, -3.5) {};

\vertex(0,- 3.5) at (0, -3.5) {};
\vertex(0, .5) at (0, .5){};
\vertex(0, -.5) at (0, -.5){};
\vertex (0, 1.5) at (0, 1.5){};
\vertex(0, -1.5) at (0, -1.5){};
\vertex (0, 2.5) at (0, 2.5){};
\vertex(0, -2.5) at (0, -2.5){};

\vertex(1, .5) at (1, .5){};
\vertex (1, -.5) at (1, -.5){};
\vertex (1, 1.5) at (1, 1.5){};
\vertex(1, -1.5) at (1, -1.5){};
\vertex (1, 2.5) at (1, 2.5){};
\vertex(1, -2.5) at (1, -2.5){};
\vertex(1,- 3.5) at (1, -3.5) {};

\vertex (2, .5) at (2, .5){};
\vertex (2, -.5) at (2, -.5){};
\vertex (2, -1.5) at (2, -1.5){};
\vertex (2, 1.5) at (2, 1.5){};
\vertex (2, 2.5) at (2, 2.5){};
\vertex(2, -2.5) at (2, -2.5){};
\vertex(2,- 3.5) at (2, -3.5) {};

\vertex (3, .5) at (3, .5){};
\vertex (3, -.5) at (3, -.5){};
\vertex (3, -1.5) at (3, -1.5){};
\vertex (3, 1.5) at (3, 1.5){};
\vertex (3, 2.5) at (3, 2.5){};
\vertex(3, -2.5) at (3, -2.5){};
\vertex(3,- 3.5) at (3, -3.5) {};

\path

	(-3, -3.5) edge (-2, -3.5)
	(-2, -2.5) edge (-1, -2.5)
	(-1, -3.5) edge (0, -3.5)
	(0, -2.5) edge (1, -2.5)
	(1, -3.5) edge (2, -3.5)
	(2, -2.5) edge (3, -2.5)
	
	(1, 2.5) edge (1, 1.5)
	(1, 1.5) edge (1, .5)
	(1, .5) edge (1, -.5)
	(1, -.5) edge (1, -1.5)
	(1, -1.5) edge (1, -2.5)
	(1, -2.5) edge (1, -3.5)
	
	(2, 2.5) edge (2, 1.5)
	(2, 1.5) edge (2, .5)
	(2, .5) edge (2, -.5)
	(2, -.5) edge (2, -1.5)
	(2, -1.5) edge (2, -2.5)
	(2, -2.5) edge (2, -3.5)
	
	(3, 2.5) edge (3, 1.5)
	(3, 1.5) edge (3, .5)
	(3, .5) edge (3, -.5)
	(3, -.5) edge (3, -1.5)
	(3, -1.5) edge (3, -2.5)
	(3, -2.5) edge (3, -3.5)
	
	(-1, 2.5) edge (-1, 1.5)
	(-1, 1.5) edge (-1, .5)
	(-1, .5) edge (-1, -.5)
	(-1, -.5) edge (-1, -1.5)
	(-1, -1.5) edge (-1, -2.5)
	(-1, -2.5) edge (-1, -3.5)
	
	(-2, 2.5) edge (-2, 1.5)
	(-2, 1.5) edge (-2, .5)
	(-2, .5) edge (-2, -.5)
	(-2, -.5) edge (-2, -1.5)
	(-2, -1.5) edge (-2, -2.5)
	(-2, -2.5) edge (-2, -3.5)
	
	(-3, 2.5) edge (-3, 1.5)
	(-3, 1.5) edge (-3, .5)
	(-3, .5) edge (-3, -.5)
	(-3, -.5) edge (-3, -1.5)
	(-3, -1.5) edge (-3, -2.5)
	(-3, -2.5) edge (-3, -3.5)
	
	(0, 2.5) edge (0, 1.5)
	(0, 1.5) edge (0, .5)
	(0, .5) edge (0, -.5)
	(0, -.5) edge (0, -1.5)
	(0, -1.5) edge (0, -2.5)
	(0, -2.5) edge (0, -3.5)
	
	;

\end{tikzpicture}

\caption{One construction of $G_2(p\p)$ with $\D = 3$, $p= 16, 24$}
\end{centering}
\end{figure}
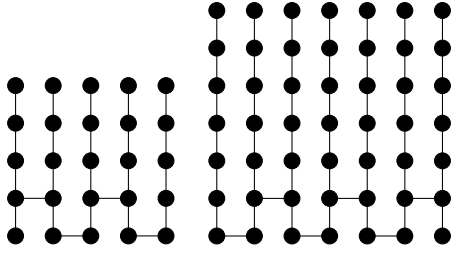

Finally, we consider the case of $\D = 3$, and show that $N^o_k(\D, p)$ and $N^e_k(\D, p)$ are $\Theta(p^k)$.  Since we are ignoring the coefficient of $p^k$, we may combine even and odd cases.  For ease of notation, we will work in $G^e$.

\begin{prop}  For $p$ large enough, there exists $G_k (p)$ in $\Z^k$ with the following properties:
\begin{enumerate}

\item The diameter of $G_k (p)$ is less than or equal to $2p$;

\item The maximum degree of vertices in $G_k (p)$ is 3;

\item $G_k (p)$ has two adjacent vertices of degree less than 3;

\item $|G_k (p)| = \lfloor p/2 \rfloor ^{k-1}(1/4)^{k-1} 2p$.

\end{enumerate}
\label{Gkp}
\end{prop}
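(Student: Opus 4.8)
The plan is to induct on $k$, building $G_k(p)$ out of many translated copies of $G_{k-1}$ stacked along the new coordinate direction $x_k$ and joined by a very sparse ``spine.'' For the base case $k=1$, take $G_1(p)$ to be the path on $2p$ consecutive lattice points of $\Z$: its maximum degree is $2<3$, its diameter is $2p-1\le 2p$, any two consecutive vertices have degree $\le 2<3$, and it has exactly $2p$ vertices, so conditions 1--4 hold with $k=1$ (the condition-4 quantity being $2p$). Figure 7 depicts the first inductive step, $G_2(p)$, and should be kept in mind as the prototype.

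For the inductive step, suppose $G_{k-1}(q)$ has been built for the relevant values of $q$, and fix $q$ and an integer $m$ both of order $p$ -- the precise ratio between them is dictated by the diameter bound below. Place translated copies $C_1,\dots,C_m$ of $G_{k-1}(q)$ in the consecutive hyperplanes $x_k=1,\dots,m$, and join $C_i$ to $C_{i+1}$ by a single mesh edge in the $x_k$-direction. Since the copies already contain vertices of degree $3$, the joining edges may only land on low-degree vertices, and this is exactly where condition 3 of the inductive hypothesis is used: each $C_i$ has a pair of adjacent vertices $u^{(i)},v^{(i)}$ of degree $\le 2$, sitting at the same relative positions $u_0,v_0\in\Z^{k-1}$ in every copy. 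Route the spine so that the gap $(i-1,i)$ is bridged along the column of $u_0$-points and the gap $(i,i+1)$ along the column of $v_0$-points, alternating at each step. Then every spine vertex receives at most one new edge, so its degree stays $\le 2+1=3$; this is the zigzag of Figure 7. Traversing the spine from $C_i$ to $C_{i+1}$ costs one $x_k$-edge plus one ``switch'' edge joining $u^{(i)}$ to $v^{(i)}$ (already present, since the pair is adjacent). The end copies $C_1$ and $C_m$ each keep an adjacent low-degree pair, which serves as the pair required by condition 3 for $G_k(p)$. This establishes conditions 2 and 3.

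It remains to check the diameter (condition 1) and count the vertices (condition 4). Any vertex of $C_i$ lies within $\mathrm{diam}(C_i)\le 2q$ of the point where the spine meets $C_i$; the spine is crossed from $C_i$ to $C_j$ in at most $2(m-1)$ steps; and the target vertex of $C_j$ is within $2q$ of the spine. Hence $\mathrm{diam}(G_k(p))\le 4q+2(m-1)$, and the construction is legitimate as soon as this is $\le 2p$, which forces $q\approx p/4$ and $m\approx p/2$ (so that $4q+2m\approx 2p$). For the count, $|G_k(p)|=m\,|G_{k-1}(q)|$ together with at most $O(p^{k-1})$ extra vertices glued near one end (exactly as in the $\mathcal{E}\p$ and $\mathcal{O}\p$ constructions) to hit the stated value on the nose; unwinding this recursion with $m,q$ of order $p$ yields the claimed count $\lfloor p/2\rfloor^{k-1}(1/4)^{k-1}2p$, which is in particular $\Theta(p^k)$, as part 3 of Theorems 1 and 2 requires.

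The step I expect to be the main obstacle is making the diameter bound and the vertex count hold simultaneously. A geodesic in $G_k(p)$ may have to cross two full copies plus the entire length of the spine, so against the budget $2p$ the parameters are tight: one must choose how many copies to stack, how large to make each one, and -- less obviously -- where inside each copy the designated adjacent low-degree pair sits, so that the ``distance to the spine'' is genuinely controlled, all while keeping $m\,|G_{k-1}(q)|$ as large as claimed. A secondary subtlety is the degree bookkeeping at the joins: a mesh vertex may a priori have degree $2k$, so the spine must meet each copy only at its two prescribed adjacent vertices and must alternate columns between consecutive gaps, which is the structural reason condition 3 has to be propagated through the induction rather than merely deduced at the end.
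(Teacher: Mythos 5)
Your construction is the paper's construction: induct on $k$, stack roughly $p/2$ translated copies of $G_{k-1}(p/4)$ along the $x_k$-axis, and thread a zigzag spine through the two designated adjacent low-degree vertices, alternating columns between consecutive gaps so each spine vertex gains at most one edge; the diameter budget $2\cdot\frac{p}{2}+2\cdot\frac{p}{2}=2p$ and the count $m\,|G_{k-1}(q)|$ with $q\approx p/4$, $m\approx p/2$ match the paper exactly. (The extra $O(p^{k-1})$ vertices you propose to glue on at one end are not needed and are not in the paper: only $\Theta(p^k)$ is required for part 3 of Theorems 1 and 2, and the stated count is just the product of the number of copies and their size.)

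The one step that does not close as written is the propagation of condition 3. You assert that the end copies $C_1$ and $C_m$ ``each keep an adjacent low-degree pair,'' but the inductive hypothesis supplies only \emph{one} adjacent low-degree pair $u^{(i)},v^{(i)}$ per copy, and the spine edge into $C_1$ must land on one of $u^{(1)},v^{(1)}$ --- nothing in the hypothesis licenses attaching anywhere else, since other vertices may already have degree 3. That endpoint can then have degree exactly 3, so the surviving member of the pair is no longer guaranteed an adjacent neighbor of degree less than 3, and condition 3 for $G_k(p)$ is not established. The paper gets around this by a separate counting argument: every copy of $G_1$ buried in the construction is a path containing on the order of $p$ adjacent degree-2 pairs, each level of the induction consumes at most one pair per copy, so for $p$ large some untouched adjacent pair survives. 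Alternatively one can strengthen condition 3 in the induction to guarantee many disjoint adjacent low-degree pairs sitting at common relative positions across copies. Either repair is routine, but some such repair is needed; as stated, your argument for condition 3 fails at the end copies.
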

\begin{proof}
We start with the base case.  Let $G_1 (p)$ be the induced subgraph on the vertices in the interval $[-p, p]$.  Now assume that there exists $G_{k-1}(p)$ for large enough $p$ satisfying the conditions in Proposition \ref{Gkp}.  We construct $G_k (p)$ as follows.

At $x_{k} = \pm i$ put a copy of $G_{k-1}(p/4)$ for $0 \leq i < p/4$.  By condition 3 of Proposition \ref{Gkp}, at each $x_{k} = \pm i$, there are two vertices, $v^i_1$ and $v^i_2$, that are adjacent and have degree less than $3$.  We choose the same pair on each copy of $G_{k-1}(ap/4)$.  Note that because we chose corresponding pairs $v^i_1$ has the same first $k-1$ coordinates for $- p/4  < i <   p/4$ and similarly for the $v^i_2$, and thus we can connect them with edges in the $k$-th dimension.  If $i$ is even, and less than $p/4-1$ then connect $v^i_1$ to $v^{i+1}_1$.  If $i$ is even and greater than $0$ connect $v^i_2$ to $v^{i-1}_2$.   Figure 7 shows a construction of $G_2(p\p)$.

To check that the degree of vertices $G_k(p)$ is bounded by 3, we note that the only vertices whose degree changed are the vertices $v^i_1$ or $v^i_2$ for $0 \leq i <1/4 p$.  By condition 3 of Proposition \ref{Gkp}, these vertices originally had degree 2, and since we added at most one edge to each, the maximum degree of vertices in $G_k (p)$ is bounded by 3.

Now we compute the diameter of $G_k (p)$.  Let $v,w$ be two vertices in $G_k (p)$ with $v$ located in the plane $x_k = i$ and $w$ located in the plane $x_k = i + j$ for $-p/4 < i < p/4$, $j < p/2$.  First assume $i$ is even.  Since $v$ is located in a copy of $G_{k-1}(p/4)$, by condition 1 of Proposition \ref{Gkp}, $v$ is at most distance $p/2$ from $v^i_1$.  By construction, $v^i_1$ is adjacent to $v^{i+1}_1$ when $i$ is even.  Then, since $i+1$ is odd, $v^{i+1}_1$ is adjacent to $v^{i+1}_2$, which is adjacent to $v^{i+2}_2$.  Finally, $v^{i+2}_2$ is adjacent to $v^{i+2}_1$, and $i+2$ has the same parity as $i$.  Since adjacencies of the $v_1^k$ and $v_2^k$ only matter up to parity, we can continue using this sequence of adjacencies,  we see that the distance from $v^i _1$ to $v^{i+j}_1$ or $v^{i+j}_2$ is at most $2j$, which is at most $p$.  Finally, from $v^{i+j}_1$ to $w$ is at most distance $p/2$.  Therefore the distance from $v$ to $w$ is at most $2p$.  If instead $i$ were odd, the proof follows the same way starting instead with $v^i_2$:  $v$ is at most distance $p/2$ from $v^{i}_2$, which is at most $2j$ from $v^{i+j}_1$ or $v^{i+j}_2$, which is at most $p/2$ from $w$.  Thus the distance in either case from $v$ to $w$ is at most $2p$.

Next we verify $G_k (p)$ has two adjacent vertices of degree less than $3$.  Notice that when $k = 1$, if $p$ is large enough then $G_1(p)$ has at least $2p$ pairs of vertices that are adjacent and have degree less than 2.  Since $G_{k}(p)$ contains around $ p^k/2^k$ copies of $G_1(p)$, and each iteration of this construction uses at most one pair of adjacent free vertices of this copy of $G_1(p)$, for large enough $p$ there will be two adjacent vertices with degree less than 2.

Finally we count the number of vertices in $G_{k} (p)$.  By condition 4 of Proposition \ref{Gkp}, $G_{k-1}(p)$ has  $\lfloor p/2 \rfloor ^{k-2}(1/4)^{k-2} 2p$ vertices.  Since $G_k(p)$ is constructed of $\lfloor p/2 \rfloor$ copies of $G_{k-1}(p/4)$, we have $|G_k (p)| = \lfloor p/2\rfloor ^{k-1}(1/4)^{k-1} 2p$.

This finishes the final case in the proof of Theorem 1 and Theorem 2.
\end{proof}
\section{Conclusions and Open Problems}

Theorem 1 and Theorem 2 narrow the bounds on the size of the largest subgraph in $k$-dimensional mesh with bounded degree and diameter.  Still, several questions remain about MaxDDBS in the mesh.  First, we would like to see a proof or counterexample of the upper bounds in Proposition 1.  While we believe these upper bounds are correct, it is not straightforward to show this.  Furthermore, the bounds in Theorems 1 and 2 could be improved.  We showed that when $\D = 3$, $N^e_k(p, \D)$ and $N^o_k(p, \D)$ are $\Theta (p^k)$.  It seems likely that this could be improved, and with better constructions it could be shown that $N^e_k(p, \D)$ and $N^o_k(p,\D)$ are $\frac{2^k p^k}{k!} + O (p^{k-1})$.  Similarly, when $\D = 4$, we get a lower bound that matches the upper bound the first two terms.  If we let $\D$ be linear in $k$, it seems likely that the lower bounds can be shown to match the upper bounds in even more terms.

Other than the mesh, we can look at MaxDDBS in other host graphs.  In \cite{Dekker} there is some discussion for hypercubes and random networks, but other than that and the case of the mesh, there has been no other work done on MaxDDBS.

\section{Acknowledgements}

This research was conducted at the Duluth REU at the University of Minnesota Duluth, funded by NSF/DMS grant 1062709 and NSA grant H98230-11-1-0224.  I would like to thank to Joe Gallian, and the advisors, Adam Hesterberg, Eric Riedl, and Davie Rolnick for all of their help.  I would also like to thank the visitors and other participants, especially Sam Elder and Ben Kraft.


\begin{thebibliography}{9}

\bibitem{Dekker} A. Dekker, H. P\'{e}rez-Ros\'{e}s, G. Pineda-Villavicencio, and P. Watters. The maximum degree \& diameter-bounded subgraph and its applications, {\it J. Math. Modeling and Algorithms}, 2012. DOI: 10.1007/s10852-012-9182-8.

\bibitem{Dougherty} R. Dougherty and V. Faber.  The degree-diameter problem for several varieties of Cayley graphs I: The Abelian case, {\it SIAM J. Discrete Math.} 17 (2004) 478-519.

\bibitem{Elspas} B. Elspas, Topological constraints on interconnection-limited logic, {\it Proc. IEEE Fifth
Symposium on Switching Circuit Theory and Logical Design}, IEEE S--164 (1964)
133-147.

\bibitem{Miller} M. Miller, H. P\'{e}rez-Ros\'{e}s, and J. Ryan. The maximum degree \& diameter-bounded subgraph in the mesh,'' {\it Discrete Applied Mathematics}, 160 (2012) 1782-1790.

\bibitem{Siran} M. Miller and J. \u{S}ir\'{a}\u{n}, Moore graphs and beyond: A survey of the degree/diameter problem, {\it Electronic Journal of Combinatorics}, Dynamic Survey 14, December 2005.



\end{thebibliography}
\end{document}